\DeclareFontFamily{U}{euf}{}
\DeclareFontShape{U}{euf}{m}{n}{%
  <5><6><7><8><9>gen*eufm%
  <10><10.95><12><14.4><17.28><20.74><24.88>eufm10%
  }{}
\DeclareFontShape{U}{euf}{b}{n}{%
  <5><6><7><8><9>gen*eufb%
  <10><10.95><12><14.4><17.28><20.74><24.88>eufb10%
  }{}
\DeclareFontFamily{U}{msb}{}
\DeclareFontShape{U}{msb}{m}{n}{%
  <5><6><7><8><9>gen*msbm%
  <10><10.95><12><14.4><17.28><20.74><24.88>msbm10%
  }{}
\DeclareFontFamily{U}{msa}{}
\DeclareFontShape{U}{msa}{m}{n}{%
  <5><6><7><8><9>gen*msam%
  <10><10.95><12><14.4><17.28><20.74><24.88>msam10%
  }{}
\newtheorem{theorem}{Theorem}[section]
\newtheorem{lemma}[theorem]{Lemma}
\newtheorem{proposition}[theorem]{Proposition}
\newtheorem{corollary}[theorem]{Corollary}
\theoremstyle{definition}
\theoremstyle{remark}
\newtheorem{remark}[theorem]{Remark}
\numberwithin{equation}{section} \frenchspacing
\begin{document}

\title[$p$-adic multiple Barnes-Euler zeta functions]{On $p$-adic multiple Barnes-Euler zeta functions and the corresponding Log Gamma functions}

\author{Su Hu}
\address{Department of Mathematics, South China University of Technology, Guangzhou, Guangdong 510640, China}
\email{mahusu@scut.edu.cn}

\author{Min-Soo Kim}
\address{Division of Mathematics, Science, and Computers, Kyungnam University,
7(Woryeong-dong) kyungnamdaehak-ro, Masanhappo-gu, Changwon-si, Gyeongsangnam-do 51767, Republic of Korea}
\email{mskim@kyungnam.ac.kr}


\subjclass[2010]{11B68, 11S80, 11M35, 11M41}
\keywords{$p$-adic multiple Barnes-Euler zeta function, Multiple $p$-adic Diamond-Euler Log Gamma function, Fermionic $p$-adic integral, Euler polynomial}

\begin{abstract}
Suppose that $\omega_1,\ldots,\omega_N$ are positive real numbers and $x$ is a complex number with positive real part.
The multiple Barnes-Euler zeta function $\zeta_{E,N}(s,x;\bar\omega)$ with parameter vector $\bar\omega=(\omega_1,\ldots,\omega_N)$ is
defined as a deformation of  the Barnes multiple  zeta function as follows
$$
\zeta_{E,N}(s,x;\bar\omega)=\sum_{t_1=0}^\infty\cdots\sum_{t_N=0}^\infty
\frac{(-1)^{t_1+\cdots+t_N}}{(x+\omega_1t_1+\cdots+\omega_Nt_N)^s}.
$$

In this paper, based on the fermionic $p$-adic integral, we define the $p$-adic analogue of multiple Barnes-Euler zeta function $\zeta_{E,N}(s,x;\bar\omega)$ which we denote by $\zeta_{p,E,N}(s,x;\bar\omega).$   We  prove several properties of  $\zeta_{p,E,N}(s,x; \bar\omega)$, including the convergent Laurent series
expansion, the distribution formula, the difference equation, the reflection functional equation and the derivative formula.
By computing the values of this kind of $p$-adic zeta function at nonpositive integers, we show that it interpolates the
higher order Euler polynomials $E_{N,n}(x;\bar\omega)$ $p$-adically.

Furthermore, we define the corresponding multiple $p$-adic Diamond-Euler Log Gamma function. We also show that the multiple $p$-adic Diamond-Euler Log Gamma function ${\rm Log}\, \Gamma_{\! D,E,N}(x;\bar\omega)$ has an integral representation by the  multiple fermionic $p$-adic integral,  and it  satisfies   the distribution formula, the difference equation, the reflection functional equation, the derivative formula  and also the Stirling's series expansions.

\end{abstract}

\maketitle
\def\ord{\text{ord}_p}
\def\ordt{\text{ord}_2}
\def\o{\omega}
\def\la{\langle}
\def\ra{\rangle}
\def\Log{{\rm Log}\, \Gamma_{\! D,E,N}}

\section{Introduction}

Throughout this paper, we  use the following notations.
\begin{equation*}
\begin{aligned}
\qquad \mathbb{C}  ~~~&- ~\textrm{the field of complex numbers}.\\
\mathbb{R}^{+}~~~&- ~\textrm{the set of positive real numbers}.\\
\qquad p  ~~~&- ~\textrm{an odd prime number}. \\
\qquad\mathbb{Z}_p  ~~~&- ~\textrm{the ring of $p$-adic integers}. \\
\qquad\mathbb{Q}_p~~~&- ~\textrm{the field of fractions of}~\mathbb Z_p.\\
\qquad\mathbb C_p ~~~&- ~\textrm{the completion of a fixed algebraic closure}~\overline{\mathbb Q}_p~ \textrm{of}~\mathbb Q.\\
\qquad v_p ~~~&- ~\textrm{the $p$-adic valuation of}~\mathbb
C_p~\textrm{normalized so that}~ |p|_p=p^{-v_p(p)}=p^{-1}
\end{aligned} \end{equation*}

The Hurwitz-type Euler zeta function is defined as follows
\begin{equation}~\label{EZ}
\zeta_{E}(s,x)=\sum_{n=0}^{\infty}\frac{(-1)^{n}}{(n+x)^{s}}
\end{equation}
for Re$(s)>0$ (see \cite{Ler} and \cite[Eq.~(1.6)]{SK16}), which is a deformation of the
Hurwitz zeta functions
\begin{equation}~\label{HZ}
\zeta(s,x)=\sum_{n=0}^{\infty}\frac{1}{(n+x)^{s}}
\end{equation}
for Re$(s)>1$. The Riemann zeta function is given by $\zeta(s)=\zeta(s,1),$ which is absolutely convergent for Re$(s)>1.$
When $x=1$, (\ref{EZ}) reduces to the Euler zeta functions (also called the Dirichlet eta functions)
\begin{equation}\label{Z}
\zeta_E(s)=\sum_{n=1}^{\infty}\frac{(-1)^{n-1}}{n^{s}}.\end{equation}
This function can be analytically continued to the complex plane without any pole. In fact, in \cite[Theorem 2.5]{Ti},
it is shown that the Euler zeta functions $\zeta_E(s)$ is summable (in the sense of Abel) to $(1-2^{1-s})\zeta(s)$
for all values of $s.$

During  the 1960s, Kubota and Leopoldt gave $p$-adic analogues of the Riemann zeta functions and the corresponding $L$-series (Dirichlet $L$-functions).
Their generalizations to  the $p$-adic Hurwitz series, named the $p$-adic Hurwitz zeta functions, have been systematically studied by Cohen \cite{Cohen} and Tangedal and Young~\cite{TY}. Recently, Young found that the $p$-adic Hurwitz zeta
function has a connection with his definition of $p$-adic Arakawa-Kaneko zeta function by an identity (see \cite[Corollary 1]{Yo15}).

The multiple Hurwitz zeta function in the complex field $$
\zeta_{N}(s,x;\omega_1,\ldots,\omega_N)=\sum_{t_1=0}^\infty\cdots\sum_{t_N=0}^\infty
\frac{1}{(x+\omega_1t_1+\cdots+\omega_Nt_N)^s},
$$ has a long history, it began with  Barnes's study on the theory of the multiple Gamma function $\Gamma_k(a).$ (See \cite[Sec. 2.1]{SC}). 
In 1977, Shintani~\cite{Shintani} evaluated the special values of certain $L$-functions attached to real quadratic number fields in terms of  Barnes's multiple Hurwitz zeta functions,
and the work of Barnes has also been applied  in the study of the determinants of the Laplacians around 1980s and 1990s (see \cite{Var} and \cite[Sec. 5.1, 5.2]{SC}). Recently, Tangedal and Young \cite{TY} considered
the $p$-adic analogue of Barnes multiple zeta functions  $\zeta_{p,N}(s,x;\omega_1,\ldots,\omega_N)$ and the corresponding  $p$-adic multiple log gamma functions $G_{p,N}(x;\omega_1,\ldots,\omega_N)$.

In \cite{TY2},  Tangedal and Young showed that, for any real quadratic field with prime $p$ splits completely in it, the $p$-adic multiple log gamma function $G_{p,2}(x;\omega_{1},\omega_{2})$ presents a representation for  the derivative at $s=0$ of the $p$-adic partial zeta function associated with any element in certain narrow ray class groups.

The main aim of this paper is to define the    $p$-adic analogue of multiple Barnes-Euler zeta functions 
$$
\zeta_{E,N}(s,x;\bar\omega)=\sum_{t_1=0}^\infty\cdots\sum_{t_N=0}^\infty
\frac{(-1)^{t_1+\cdots+t_N}}{(x+\omega_1t_1+\cdots+\omega_Nt_N)^s}.
$$
and study their properties. Our approaches are mainly based on the theory of fermionic $p$-adic integrals. So we recall the definition and applications of these integrals in the following subsection.

\subsection{The fermionic $p$-adic integral and its applications} 
 Let $UD(\mathbb Z_p)$ be the space of all uniformly (or strictly) differentiable $\mathbb C_p$-valued functions on $\mathbb Z_p$
(see \cite[\S11.1.2]{Cohen}).
The fermionic $p$-adic integral $I_{-1}(f)$  on $\mathbb Z_p$ of a function $f\in UD(\mathbb Z_p)$ is
defined by
\begin{equation}\label{-q-e2}
I_{-1}(f)=\int_{\mathbb Z_p}f(a)d\mu_{-1}(a)
=\lim_{N\rightarrow\infty}\sum_{a=0}^{p^N-1}f(a)(-1)^a.
\end{equation}
 In view of (\ref{-q-e2}), for any $f\in UD(\mathbb Z_p),$ we have
\begin{equation}\label{-q-e-diff}
\int_{\mathbb Z_p}f(t+1)d\mu_{-1}(t)+\int_{\mathbb Z_p}f(t)d\mu_{-1}(t)=2f(0)
\end{equation}
(see \cite[p.~782, (7)]{KT2}).

The fermionic $p$-adic integral (\ref{-q-e2})
were independently found as special cases by Katz \cite[p.~486]{Katz} (in Katz's notation, the $\mu^{(2)}$-measure), Shiratani and
Yamamoto \cite{Shi}, Osipov \cite{Osipov}, Lang~\cite{Lang} (in Lang's notation, the $E_{1,2}$-measure), T. Kim~\cite{KT2} from very different viewpoints.
And they have many applications in number theory. First, they are nice tools for studying many special numbers and polynomials, in particular, the Euler numbers and polynomials.

The Euler  polynomials $E_k(x),k\in\mathbb N_0=\mathbb N\cup\{0\},$ are defined by the generating function
\begin{equation}\label{Eu-pol}
\frac{2e^{xt}}{e^t+1}=\sum_{k=0}^\infty E_k(x)\frac{t^k}{k!},
\end{equation}
and the integers $E_{k}=2^{k}E_{k}\left({1}/{2}\right),k\in\mathbb N_0,$ are called Euler numbers.
For example, $E_0=1,E_2=-1,E_4=5,$ and $E_6=-61.$
The Euler numbers and polynomials (so called by Scherk in 1825) appear in Euler's famous book,
Insitutiones Calculi Differentials (1755, pp.487--491 and p.522).
Notice that the Euler numbers with odd subscripts vanish, that is, $E_{2m+1}=0$ for all $m\in\mathbb N_0.$

In 1875, Stern \cite{Stern} gave a  brief sketch of a proof of the following congruence of Euler numbers modulo powers of two:
\begin{equation} E_k\equiv E_l~(\textrm{mod}~~ 2^n)\iff k\equiv l ~(\textrm{mod}~~ 2^n),\end{equation}
for any $n\in\mathbb{Z}^{+}$, and $k,l$ be even.
In 1910, Frobenius  gave a detail for Stern's sketch. In 1979,  Ernvall \cite{Ernvall} argued that he could not understand Frobenius's proof and provided 
his own proof  using umbral calculus. In 2002, Wagstaff \cite{Wagstaff} presented an induction proof and in 2004, Sun \cite{Sun} gave a combinational proof.

The $p$-adic approach to the above formula seems  to be simple and general. It is based on the following Witt's formula of Euler numbers, which gives a $p$-adic integral representation of Euler numbers:
$$E_{n}=\int_{\mathbb{Z}_{p}}(2a+1)^{n} d\mu_{-1}(a).$$
In 2009, by using the above Witt's formula, Kim \cite[Theorem 2.11]{Kim-E} gave a brief proof of the above Stern's congruence.
In 2010, also using the theory of $p$-adic integration, Ma\"iga \cite[Theorem 6]{Maiga} proved a Kummer type congruence  for higher order Euler numbers:
$$E_{rp^{k}+s}^{(q)}\equiv E_{rp^{k-1}+s}^{(q)}~~(\textrm{mod}~p^{k}\mathbb{Z}_{p}),$$  where $r,k$ and $s$ are integers such that $(r,s)=1$, $k\geq 1$ and $s\geq 0$. 

The fermionic $p$-adic integral may also apply to many other special polynomials and numbers, for example, the Genocchi numbers.  In \cite{Simsek}, by using the fermionic $p$-adic integral $I_{-1}(f)$, Cangul, Ozden and Simsek defined generating functions of higher order $w$-Genocchi numbers $G_{n,w}^{(N)}$:
\begin{equation}\label{Can09}
\begin{aligned}
\int_{\mathbb Z_p^N}w^{x_1+\cdots+x_N}e^{t({x_1+\cdots+x_N})}d\mu_{-1}(x_1)\cdots d\mu_{-1}(x_N)
&=\sum_{n=0}^\infty G_{n,w}^{(N)}\frac{t^n}{n!} \\
&=2^N\left(\frac{t}{we^t+1}\right)^N
\end{aligned}
\end{equation}
(see \cite[(7)]{Simsek}), they also presented a Witt-type integral representations for these numbers:
\begin{equation}\label{Can-thm3}
\int_{\mathbb Z_p^N}(x_1+\cdots+x_N)^nw^{x_1+\cdots+x_N}d\mu_{-1}(x_1)\cdots d\mu_{-1}(x_N)
=\frac{G_{n,w}^{(N)}}{N!\binom{n+N}{N}}
\end{equation}
(see \cite[Theorem 3]{Simsek}).

In \cite{KS}, using the fermionic $p$-adic integral (\ref{-q-e2}),
we also defined $\zeta_{p,E}(s,x)$, the $p$-adic analogue of  Hurwitz-type Euler zeta functions (\ref{EZ}), which interpolates (\ref{EZ}) at nonpositive integers (see \cite[Theorem 3.8(2)]{KS}),
so called the $p$-adic Hurwitz-type Euler zeta functions.

We have also proved several properties of $\zeta_{p,E}(s,x)$, including the analyticity, the convergent Laurent series
expansion, the distribution formula, the difference equation, the reflection functional equation, the derivative formula and the $p$-adic Raabe formula.

The  $p$-adic Hurwitz-type Euler zeta function $\zeta_{p,E}(s,x)$ has been used by Hu, Kim and Kim to give a definition for the $p$-adic Arakawa-Kaneko-Hamahata zeta functions, which interpolate Hamahata's poly-Euler polynomials at non-positive integers
(see \cite[Definition 1.4]{SK17}).  And from the properties of $\zeta_{p,E}(s,x)$, they also obtained many identities on the $p$-adic Arakawa-Kaneko-Hamahata zeta functions,
including their derivative formula, difference equation and  reflection formula (see \cite[Theorems 2.1, 2.3 and 2.5]{SK17}). Recently, the $p$-adic  function $\zeta_{p,E}(s,x)$  becomes as a special case of Young's  definition of a $p$-adic analogue for the generalized Barnes zeta functions of order zero associated to the function $f(t) = r\textrm{Li}_{k}((1-e^{-t})/r)/(1-e^{-t}),$ where $\textrm{Li}_{k}(z)=\sum_{k=1}^{\infty}\frac{z^{m}}{m^{k}}$ (see \cite[(2.35)]{Yo17}).

In \cite[Sec.5]{KS}, using these zeta functions as building blocks, we also gave a definition for the corresponding $L$-functions $L_{p,E}(\chi,s)$, so called $p$-adic Euler $L$-functions. In \cite{SK16}, we showed that a case of Gross's refined Dedekind zeta  functions (see \cite[Sec.1]{Gross}, \cite[Sec.2]{SK16} and \cite[Sec.7]{Aoki}) may be represented by a product of Euler $L$-functions $L_{E}(\chi,s)$ (see \cite[Propositions 3.2 and 3.3]{SK16}), so the $p$-adic function $L_{p,E}(\chi,s)$ gives a growth formula for the $(S,\{2\})$-refined class numbers of the maximal real subfields of $p$-cyclotomic number fields (see \cite[Theorem 1.2 and Sec.4]{SK16}).

In \cite{KS13}, also using the fermionic $p$-adic integral $I_{-1}(f)$ (\ref{-q-e2}), we defined the corresponding $p$-adic Log Gamma functions,
named the $p$-adic Diamond-Euler Log Gamma functions. For example, in \cite[Sec. 6]{KS13}, using the $p$-adic Hurwitz-type Euler zeta functions, we found that the derivative of the $p$-adic Hurwitz-type Euler zeta functions $\zeta_{p,E}(s,x)$ at $s=0$ may be represented by the $p$-adic Diamond-Euler Log Gamma functions. This led us to connect the $p$-adic Hurwitz-type Euler zeta functions to the $(S,\{2\})$-version of the abelian rank one Stark conjecture (see \cite{Va}).

\subsection{Main works} 
Suppose that $\omega_1,\ldots,\omega_N$ are positive real numbers and $x$ is a complex number with positive real part.
The Barnes multiple zeta function $\zeta_{N}(s,x;\bar\omega)$ with parameter vector $\bar\omega=(\omega_1,\ldots,\omega_N)$ is defined
for Re$(s)>N$ by
\begin{equation}\label{B-M-def}
\zeta_{N}(s,x;\omega_1,\ldots,\omega_N)=\sum_{t_1=0}^\infty\cdots\sum_{t_N=0}^\infty
\frac{1}{(x+\omega_1t_1+\cdots+\omega_Nt_N)^s},
\end{equation}
which may also be written more concisely as
\begin{equation}\label{B-E-def-2}
\zeta_{N}(s,x;\bar\omega)=\sum_{\bar t\in\mathbb Z_0^N}^\infty
 (x+\bar\omega\cdot\bar t\,)^{-s}
\end{equation} (see \cite[(2.2)]{TY}).
When $N=1$ and $\omega_{1}=1$ this reduces to the Hurwitz zeta functions (\ref{HZ}).
As a function of $s$, $\zeta_{N}(s,x;\bar\omega)$ is analytic for Re$(s)>N$ and it has a meromorphic continuation to all of $\mathbb{C}$ with simple poles at $s = 1,\ldots,N$.
The Barnes multiple zeta function $\zeta_{N}(s,x;\bar\omega)$ also satisfies the difference equation (see \cite[(2.3)]{TY}), the derivative formula \cite[(2.5)]{TY}, and it also interpolates the higher order Bernoulli polynomials  $B_{N,n}(x;\bar\omega)$
at nonpositive integers (see \cite[(2.6)]{TY}).

Suppose that $\omega_1,\ldots,\omega_N\in\mathbb C_p^\times,$ and let $\Lambda$ denote the $\mathbb Z_p$-linear span of $\{\omega_1,\ldots,\omega_N\}.$
For $x\in\mathbb C_p\setminus\Lambda,$ Tangedal and Young~\cite[(3.1)]{TY} defined the $p$-adic analogue of Barnes multiple zeta function $\zeta_{N}(s,x;\bar\omega)$  with parameter vector $\bar\omega=(\omega_1,\ldots,\omega_N)$
by
the following equality
\begin{equation}\label{mul-def-zeta}
\begin{aligned} \zeta_{p,N}(s,x;\bar\omega)&=\frac{1}{\omega_{1}\cdots\omega_{N}(s-1)\cdots(s-N)}\\
&\quad\times\int_{\mathbb Z_p}\cdots\int_{\mathbb Z_p}
\frac{(x+\omega_1 t_1+\cdots+\omega_N t_N)^{N}}{\langle x+\omega_1 t_1+\cdots+\omega_N t_N\rangle^s}\,
dt_{1}\cdots dt_{N}.
\end{aligned}
\end{equation}
In which, the multiple Volkenborn  integrals  are all computable as iterated Volkenborn integrals.
Here the Volkenborn integral for a function $f$ which is strictly differentiable on $\mathbb{Z}_{p}$ is defined by
(comparing with (\ref{-q-e2}) above of the definition of fermionic $p$-adic integrals)
\begin{equation}\label{vol-int}
\int_{\mathbb Z_p}f(a)da=\lim_{N\to\infty}\frac1{p^N}\sum_{a=0}^{p^N-1}f(a)
\end{equation}
and this limit always exists when $f\in UD(\mathbb Z_p)$ (see \cite[p.\,264]{Ro} and \cite[\S55]{Sc}).
This integral was introduced by Volkenborn
\cite{Vo} and he also investigated many important properties of
$p$-adic valued functions defined on the $p$-adic domain (see
\cite{Vo,Vo1}). At the $k$-th iteration, with
$1\leq k\leq N,$ we integrate
$$\int_{\mathbb Z_p}F_k(t_k,t_{k+1},\ldots,t_N)dt_{k},$$
if $F_k(t_k,t_{k+1},\ldots,t_N)$ are strictly differentiable on $\mathbb{Z}_{p}$ for each fixed vector $(t_{k+1},\ldots,t_N)\in\mathbb Z_p^{N-k}.$

Tangedal and Young~\cite{TY} proved several properties of  $\zeta_{p,N}(s,x; \bar\omega)$, including the convergent Laurent series
expansion (see \cite[Theorem 4.1]{TY}), the distribution formula, the difference equation, the reflection functional equation and the derivative formula
(see \cite[Theorem 3.2]{TY}).
They also showed that $\zeta_{p,N}(s,x;\bar\omega)$ nicely interpolates the
higher order Bernoulli polynomials  $B_{N,n}(x;\bar\omega)$ $p$-adically at nonpositive integers (see \cite[Theorem 3.2(v)]{TY}).
Furthermore, they  \cite[(3.11)]{TY} defined the $p$-adic multiple log gamma function $G_{p,N}(x,\bar\omega)$ as the derivative of
$\zeta_{p,N}(s,x; \bar\omega)$ at $s=0$. They also showed that the $p$-adic multiple log gamma function $G_{p,N}(x;\bar\omega)$ has an integral representation
by the  multiple Volkenborn  integral (see \cite[(3.12)]{TY}),  and it  satisfies  the distribution formula, the difference equation, the reflection functional equation, the derivative formula (see \cite[Theorem 3.4]{TY}) and the Stirling's series expansions (see \cite[Theorem 4.2]{TY}).

Recently, using the reflection formula for $p$-adic multiple zeta functions defined in \cite{TY}, Young obtained  strong congruences for sums of the form $\sum_{n=0}^{N}B_{n}V_{n+1}$, where $B_{n}$ denotes the Bernoulli number and $V_{n}$ denotes a Lucas sequence of the second kind (see \cite{Yo16}).

Suppose that $\omega_1,\ldots,\omega_N$ are positive real numbers and $x$ is a complex number with positive real part.
The multiple Barnes-Euler zeta function $\zeta_{E,N}(s,x;\bar\omega)$ with parameter vector $\bar\omega=(\omega_1,\ldots,\omega_N)$ is
defined as a deformation of  the Barnes multiple zeta function $\zeta_{N}(s,x;\bar\omega)$ (\ref{B-M-def}) as follows
\begin{equation}\label{B-E-def}
\zeta_{E,N}(s,x;\omega_1,\ldots,\omega_N)=\sum_{t_1=0}^\infty\cdots\sum_{t_N=0}^\infty
\frac{(-1)^{t_1+\cdots+t_N}}{(x+\omega_1t_1+\cdots+\omega_Nt_N)^s}
\end{equation}
for Re$(s)>0$, which may also be written more concisely as
\begin{equation}\label{B-E-def-2}
\zeta_{E,N}(s,x;\bar\omega)=\sum_{\bar t\in\mathbb Z_0^N}^\infty
(-1)^{|\bar t |}(x+\bar\omega\cdot\bar t\,)^{-s}.
\end{equation}
Here $|\bar t|=t_1+\cdots+t_N.$
When $N=1$ and $\omega_{1}=1$ this reduces to the Hurwitz-type Euler zeta functions (\ref{EZ}).
When $N=0$ there is no parameter vector $\bar\omega$ but we may still regard the above equation as defining
the function $\zeta_{E,N}(s,x;-)=x^{-s}.$ As a function of $s$, $\zeta_{E,N}(s,x;\bar\omega)$ is analytic for Re$(s)>0$
and continued analytically to $s\in\mathbb C$ without any pole (see (\ref{B-E-def-3})).
The multiple Barnes-Euler zeta function $\zeta_{E,N}(s,x;\bar\omega)$ also satisfies the difference equation (see Lemma \ref{ele-prop}(1)),
the derivative formula (see Lemma \ref{ele-prop}(3)), and it also interpolates the higher order Euler polynomials $E_{N,n}(x;\bar\omega)$
at nonpositive integers (see Lemma \ref{val-ne}).

Suppose that $\omega_1,\ldots,\omega_N\in\mathbb C_p^\times,$ and let $\Lambda$ denote the $\mathbb Z_p$-linear span of $\{\omega_1,\ldots,\omega_N\}.$
In this paper, inspired by Tangedal and Young's work \cite{TY}, based on the fermionic $p$-adic integral (\ref{-q-e2}), we define the $p$-adic analogue of multiple Barnes-Euler zeta function $\zeta_{E,N}(s,x;\bar\omega),$  which we denote by $\zeta_{p,E,N}(s,x;\bar\omega)$ (see (\ref{mul-def-E-zeta})).
It is analytic  in certain areas for $s\in\mathbb{C}_{p}$  and $x\in\mathbb{C}_p\setminus\Lambda$ (see Theorem~\ref{analytic}).
We will prove several properties of  $\zeta_{p,E,N}(s,x; \bar\omega)$, including the convergent Laurent series
expansion (see Theorem \ref{t-expan-p=ga}), the distribution formula, the difference equation, the reflection functional equation and the derivative formula
(see Theorem \ref{e-zeta-thm1}).
We will also show that $\zeta_{p,E,N}(s,x;\bar\omega)$ interpolates the
higher order Euler polynomials  $E_{N,n}(x;\bar\omega)$ $p$-adically at nonpositive integers (see Theorem \ref{e-zeta-thm1}(5)). 

Furthermore, we will  define the multiple $p$-adic Diamond-Euler Log Gamma function $\Log(x,\bar\omega)$ as the derivative of $x\zeta_{p,E,N}(s,x; \bar\omega)/\langle x\rangle(s-1)$ at $s=0$
(see (\ref{mul-e-log})). As a function of $x$, it is  locally analytic   on $\mathbb C_p\setminus\Lambda$ (see Theorem~\ref{analytic-gamma}).
We will  also show  that the multiple $p$-adic Diamond-Euler Log Gamma function $\Log(x;\bar\omega)$ has an integral representation by the
multiple fermionic $p$-adic integral (see Proposition \ref{mul-e-log-int}),  so it extends the definition of the  $p$-adic Diamond-Euler Log Gamma function given
in \cite[(2.1)]{KS13}. And it satisfies  the distribution formula, the difference equation, the reflection functional equation, the derivative formula
(see Theorem \ref{e-gamma-thm1}) and also the Stirling's series expansions (see Theorem \ref{Stirling}).

\section{Multiple Barnes-Euler zeta functions and Euler polynomials of higher order}
In this section, we give a brief review of the properties of the multiple Barnes-Euler zeta functions
(the reader may refer to \cite{Choi} for an excellent modern treatment). The Euler polynomials of higher order
arise naturally in the study of these functions and in the first part of section 3 we give a $p$-adic realization of these polynomials
in terms of multiple fermionic $p$-adic integrals.

Let $\mathbb N_0$ denote the set of nonnegative integers, that is $\mathbb N_0=\mathbb N\cup\{0\}$
and $\mathbb R^+$ the set of positive real numbers.
Suppose that $\omega_1,\ldots,\omega_N$ are positive real numbers and $x$ is a complex number with positive real part.
The Euler polynomial $E_{N,n}(x;\bar\omega)$ of order $N$ and degree $n$ with parameter vector $\bar\omega
=(\omega_1,\ldots,\omega_N)$ is defined by
\begin{equation}\label{Eu-poly-def}
\frac{2^Ne^{xt}}{\prod_{j=1}^N\left(e^{\omega_jt}+1\right)}=\sum_{n=0}^\infty E_{N,n}(x;\bar\omega)\frac{t^n}{n!}.
\end{equation}
For $N=1$ and $\omega_1=1,$ they coincide with usual Euler polynomials $E_n(x),$ that is, $E_{1,n}(x;1)=E_n(x)$ for all $n\in\mathbb N_0.$
When $N=0$ there is no parameter vector $\bar\omega$ but we still use the generating function to define
\begin{equation}\label{eu-N=0}
E_{0,n}(x;-)=x^n.
\end{equation}
If all $\omega_i=1$ then $E_{n}^{(N)}(x)=E_{N,n}(x;1,\ldots,1)$ are the polynomials studied in \cite{Choi}, \cite{No} and \cite{Yo}.

As the classical Riemann zeta functions (\ref{Z}), the multiple Barnes-Euler zeta function $\zeta_{E,N}(s,x;\bar\omega)$ defined in (\ref{B-E-def}) may also be represented by the Mellin transform as follows
\begin{equation}\label{B-E-def-3}
\zeta_{E,N}(s,x;\bar\omega)=\frac1{\Gamma(s)}\int_0^\infty t^se^{-xt}\prod_{j=1}^N\left(1+e^{-\omega_jt}\right)^{-1}\frac{dt}{t},
\end{equation}
where Re$(s)>0,$
since if substituting the  following power series  into (\ref{B-E-def-3})
\begin{equation}\label{re-1}
\prod_{j=1}^N\left(1+e^{-\omega_jt}\right)^{-1}=\sum_{m_1=0}^\infty\cdots\sum_{m_N=0}^\infty(-1)^{m_1+\cdots+m_N}e^{-t(m_1\omega_1+\cdots+m_N\omega_N)}
\end{equation}
and noticing that
\begin{equation}\label{re-2}
\int_0^\infty t^s e^{-\omega t}\frac{dt}{t}=\omega^{-s}\Gamma(s),
\end{equation}
then we have (\ref{B-E-def}), which is used as a starting point of our work.
By (\ref{B-E-def-3}),  $\zeta_{E,N}(s,x;\bar\omega)$ can be continued analytically to $s\in\mathbb C$ without any pole.

\begin{lemma}\label{ele-prop}
The following identities hold:
\begin{enumerate}
\item $\zeta_{E,N}(s,x+\omega_N;\bar\omega)+\zeta_{E,N}(s,x;\bar\omega)=\zeta_{E,N-1}(s,x;\omega_1,\ldots,\omega_{N-1}).$
\item $\zeta_{E,N}(s,cx;c\bar\omega)=c^{-s}\zeta_{E,N}(s,x;\bar\omega)$ for all $c\in\mathbb R^+.$
\item $\frac{\partial^m}{\partial x^m}\zeta_{E,N}(s,x;\bar\omega)=(-1)^m(s)_m\zeta_{E,N}(s+m,x;\bar\omega),$
where $(s)_m$ is the Pochhammer symbol defined by $(s)_m=s(s+1)\cdots(s+m-1)$ if $m\in\mathbb N$ and 1 if $m=0.$
\end{enumerate}
\end{lemma}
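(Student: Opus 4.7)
The plan is to verify each identity first in the region of absolute convergence $\operatorname{Re}(s)>0$ directly from the series definition (\ref{B-E-def}), and then invoke the analytic continuation provided by the Mellin representation (\ref{B-E-def-3}) to extend to all $s\in\mathbb{C}$.

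For part (1), I would start with $\zeta_{E,N}(s,x+\omega_N;\bar\omega)$ and perform the index shift $t_N\mapsto t_N-1$ in the innermost sum. Writing $x+\omega_N+\omega_N t_N=x+\omega_N(t_N+1)$, this rewrites the sum as $\sum_{t_N\geq 1}$ with the parity $(-1)^{t_1+\cdots+t_{N-1}+t_N-1}$, which produces an overall minus sign. Adding $\zeta_{E,N}(s,x;\bar\omega)$ then cancels all terms with $t_N\geq 1$, and only the $t_N=0$ slice of $\zeta_{E,N}(s,x;\bar\omega)$ survives. That slice is exactly $\zeta_{E,N-1}(s,x;\omega_1,\ldots,\omega_{N-1})$ by definition, and when $N=1$ the boundary case (\ref{eu-N=0})-type convention $\zeta_{E,0}(s,x;-)=x^{-s}$ applies. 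Part (2) is essentially formal: substituting $x\mapsto cx$, $\omega_j\mapsto c\omega_j$ factors $c^{-s}$ out of every $(cx+c\bar\omega\cdot\bar t)^{-s}=c^{-s}(x+\bar\omega\cdot\bar t)^{-s}$, while the signs $(-1)^{|\bar t|}$ are unaffected.

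For part (3), I would differentiate the series termwise, using $\frac{\partial}{\partial x}(x+\bar\omega\cdot\bar t)^{-s}=-s(x+\bar\omega\cdot\bar t)^{-s-1}$ and iterating $m$ times to obtain the factor $(-s)(-s-1)\cdots(-s-m+1)=(-1)^m(s)_m$. Termwise differentiation is justified for $\operatorname{Re}(s)$ sufficiently large (say $\operatorname{Re}(s)>0$) by local uniform convergence of both the original series and all its derivatives, thanks to the alternating signs together with the standard absolute-convergence bounds that control the tails of $\sum(x+\bar\omega\cdot\bar t)^{-s-m}$ on compact subsets of the right half-plane.

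In all three cases the resulting identity is an equality of meromorphic (indeed entire in $s$) functions on a nonempty open set, so by the identity principle together with the analytic continuation of $\zeta_{E,N}(s,x;\bar\omega)$ to all $s\in\mathbb{C}$ via (\ref{B-E-def-3}), the identities extend to all $s$. The only mild subtlety—and the closest thing to an obstacle—is bookkeeping the sign flip and the telescoping in part (1), and then tracking that the analytic continuations on both sides coincide; once the convergence regime is secured by (\ref{B-E-def-3}), these points are routine.
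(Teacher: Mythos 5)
The paper states Lemma \ref{ele-prop} without any proof at all (it is presented as a list of elementary properties following directly from the series and Mellin-integral definitions), so there is no in-paper argument to compare against; your write-up supplies the omitted details and is correct. The telescoping index shift for (1), the homogeneity for (2), the termwise differentiation for (3), and the final appeal to the identity principle via the entire continuation (\ref{B-E-def-3}) are all sound, and your convention $\zeta_{E,0}(s,x;-)=x^{-s}$ in the $N=1$ boundary case matches the paper's. One small caution: in (3) the phrase ``absolute-convergence bounds'' is not quite right, since for $0<\operatorname{Re}(s+m)\le N$ the multiple series is only conditionally convergent and the locally uniform convergence of the differentiated series must come from the alternating structure (iterated Abel summation), which you do invoke; alternatively, (3) follows for all $s$ at once by differentiating under the integral sign in (\ref{B-E-def-3}), using $\partial_x e^{-xt}=-te^{-xt}$ and $\Gamma(s+1)=s\Gamma(s)$, which avoids the convergence bookkeeping entirely.
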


Following Ruijsenaars (see \cite[(3.8)]{Ru}), in (\ref{re-2}), replace $s$ by $s+k-1,$ it clear that $\zeta_{E,N}(s,x;\bar\omega)$ satisfies the equation
\begin{equation}\label{int-re-1}
\begin{aligned}
\zeta_{E,N}(s,x;\bar\omega)&=\frac1{\Gamma(s)}\sum_{k=0}^M\frac{(-1)^k}{k!}E_{N,k}(0;\bar\omega)x^{1-s-k}\Gamma(s+k-1) \\
&\quad+\frac1{\Gamma(s)}\int_0^\infty t^se^{-xt}\left(\frac1{\prod_{j=1}^N\left(1+e^{-\omega_jt}\right)}
-\sum_{k=0}^M\frac{(-1)^k}{k!}E_{N,k}(0;\bar\omega)t^{k-1}\right)\frac{dt}{t}
\\
&=E_{N,0}(0;\bar\o)x^{-s+1}\frac{\Gamma(s-1)}{\Gamma(s)}-E_{N,1}(0;\bar\o)x^{-s}\frac{\Gamma(s)}{\Gamma(s)} \\
&\quad+\sum_{k=2}^M\frac{(-1)^k}{k!}E_{N,k}(0;\bar\omega)x^{1-s-k}\prod_{l=0}^{k-2}(s+l) \\
&\quad+\frac1{\Gamma(s)}\int_0^\infty t^se^{-xt}\left(\frac1{\prod_{j=1}^N\left(1+e^{-\omega_jt}\right)}
-\sum_{k=0}^M\frac{(-1)^k}{k!}E_{N,k}(0;\bar\omega)t^{k-1}\right)\frac{dt}{t}.
\end{aligned}
\end{equation}
We set
\begin{equation}\label{ru1}
\log \Gamma_{E,N}(x;\bar\omega)=\frac{\partial}{\partial s}\zeta_{E,N}(s,x;\bar\omega)\biggl|_{s=0}.
\end{equation}
Then using $\frac1{\Gamma(s)}=s+O(s^s)$ for $s\to0$ and using $E_{N,0}(0;\bar\o)=1,$
Barnes's asymptotic formula for the derivative of $\zeta_{E,N}(s,x;\bar\omega)$
at $s=0$ can be written as:

\begin{theorem}\label{int-re-3} We have
$$
\begin{aligned}
\log \Gamma_{E,N}(x;\bar\omega)&=x(\log x -1)+E_{N,1}(0;\bar\o)\log x  \\
&\quad+\sum_{k=2}^M\frac{(-1)^k}{k!}E_{N,k}(0;\bar\omega)x^{-k+1}(k-2)!+R_{N,M}(x),
\end{aligned}
$$
where
$$R_{N,M}(x)=\int_0^\infty e^{-xt}\left(\frac1{\prod_{j=1}^N\left(1+e^{-\omega_jt}\right)}
-\sum_{k=0}^M\frac{(-1)^k}{k!}E_{N,k}(0;\bar\omega)t^{k-1}\right)\frac{dt}{t}.$$
\end{theorem}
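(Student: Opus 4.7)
The strategy is to differentiate Ruijsenaars's analytic representation (\ref{int-re-1}) with respect to $s$ and evaluate at $s=0$, collecting contributions term by term. Because (\ref{int-re-1}) is an identity of functions analytic on a neighborhood of $s=0$ (the Taylor subtraction is precisely what tames the integral near $t=0$), termwise differentiation under the integral is immediate and no further analytic continuation machinery is needed.

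The only analytic inputs I would invoke are the Laurent expansion
\[
\frac{1}{\Gamma(s)}=s+\gamma s^{2}+O(s^{3})\qquad (s\to 0),
\]
so that $1/\Gamma(s)$ vanishes at $s=0$ with derivative $1$, together with the trivial identity $\Gamma(s-1)/\Gamma(s)=1/(s-1)$.

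With these in hand, I would evaluate the four types of contributions separately. The $k=0$ summand, using $E_{N,0}(0;\bar\o)=1$, simplifies to $x^{1-s}/(s-1)$; a short computation of $\frac{d}{ds}\bigl[x^{1-s}/(s-1)\bigr]$ at $s=0$ gives $x(\log x-1)$. The $k=1$ summand equals $-E_{N,1}(0;\bar\o)x^{-s}$, whose derivative at $s=0$ is $E_{N,1}(0;\bar\o)\log x$. For $k\ge 2$ the Pochhammer-like product $\prod_{l=0}^{k-2}(s+l)=s(s+1)\cdots(s+k-2)$ vanishes at $s=0$ while its $s$-derivative there equals $(k-2)!$, so only this derivative contributes; multiplying by $x^{1-s-k}\big|_{s=0}=x^{1-k}$ and the prefactor recovers the claimed sum. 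Finally, since $1/\Gamma(s)$ has a simple zero at $s=0$, the remainder integral contributes only through the product-rule term $\frac{d}{ds}\bigl[1/\Gamma(s)\bigr]_{s=0}=1$ multiplying $\int_{0}^{\infty}e^{-xt}(\cdots)\,dt/t$, which is precisely $R_{N,M}(x)$.

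The only point that deserves care—but is not a substantive obstacle—is the justification of differentiation under the integral sign in a neighborhood of $s=0$. This follows from the fact that, by construction, the parenthesized integrand in (\ref{int-re-1}) is $O(t^{M})$ as $t\to 0^{+}$ and decays exponentially as $t\to\infty$, so the integral defining $R_{N,M}(x)$ and its $s$-derivatives converge absolutely and uniformly on a neighborhood of $s=0$; this is exactly the reason Ruijsenaars's subtraction was introduced in (\ref{int-re-1}) and requires no new argument here.
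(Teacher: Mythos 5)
Your proposal follows exactly the paper's route: the paper obtains Theorem \ref{int-re-3} simply by differentiating (\ref{int-re-1}) at $s=0$ using $1/\Gamma(s)=s+O(s^2)$, $\Gamma(s-1)/\Gamma(s)=1/(s-1)$ and $E_{N,0}(0;\bar\omega)=1$, which is precisely your term-by-term computation (indeed you supply more detail than the paper, which merely asserts the result after stating these facts). The one caveat is inherited from the paper rather than introduced by you: your justification that the parenthesized integrand is $O(t^{M})$ as $t\to0^{+}$ presupposes that the subtracted sum is the Taylor expansion of $\prod_{j=1}^{N}(1+e^{-\omega_j t})^{-1}$, whereas (\ref{int-re-1}) as printed uses powers $t^{k-1}$ (a Ruijsenaars-type normalization carried over from the Barnes case), so this point should be read according to the paper's own convention.
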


Using different methods as in \cite{Choi}, we obtain the following result which connections the special values of
$\zeta_{E,N}(s,x;\bar\omega)$ to the Euler polynomials of order $N$.

\begin{lemma}\label{val-ne}
The special value of $\zeta_{E,N}(s,x;\bar\omega)$ at a rational integer $k$ ($k\in\mathbb N_0$) is  given by equating the coefficients of powers
of $t$ in the identity
$$\sum_{k=0}^\infty \zeta_{E,N}(-k,x;\bar\omega)\frac{t^k}{k!}=\frac{e^{xt}}{\prod_{j=1}^N\left(e^{\omega_jt}+1\right)}.$$
In particular, we have
$$\zeta_{E,N}(-k,x;\bar\omega)=\frac1{2^N}E_{N,k}(x;\bar\omega).$$
\end{lemma}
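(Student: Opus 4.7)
The plan is to derive the values $\zeta_{E,N}(-k,x;\bar\omega)$ from the Mellin transform representation \eqref{B-E-def-3} by a standard residue argument, and then to identify the outcome with the higher-order Euler polynomials by direct comparison with the generating function \eqref{Eu-poly-def}.

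First, I would Taylor expand the factor $e^{-xt}\prod_{j=1}^{N}(1+e^{-\omega_j t})^{-1}$ occurring in the integrand of \eqref{B-E-def-3} about $t=0$. Replacing $t$ by $-t$ in \eqref{Eu-poly-def} immediately yields
$$e^{-xt}\prod_{j=1}^{N}(1+e^{-\omega_j t})^{-1}=\frac{1}{2^{N}}\sum_{k=0}^{\infty}(-1)^{k}E_{N,k}(x;\bar\omega)\frac{t^{k}}{k!}$$
in a neighborhood of the origin (the radius of convergence is $\min_{j}\pi/\omega_{j}>0$, determined by the nearest zero of $\prod(1+e^{-\omega_{j}t})$), so the higher-order Euler polynomials enter the integrand automatically. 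Next, to analytically continue the Mellin integral to all $s\in\mathbb C$, I would split $\int_{0}^{\infty}=\int_{0}^{r}+\int_{r}^{\infty}$ with $r$ chosen inside this radius. The tail $\int_{r}^{\infty}$ is entire in $s$ because the integrand decays exponentially at infinity. On $(0,r)$, termwise integration --- justified by uniform convergence of the Taylor series on compact subsets --- produces
$$\int_{0}^{r}t^{s-1}e^{-xt}\prod_{j=1}^{N}(1+e^{-\omega_j t})^{-1}\,dt=\frac{1}{2^{N}}\sum_{k=0}^{\infty}\frac{(-1)^{k}E_{N,k}(x;\bar\omega)}{k!}\cdot\frac{r^{s+k}}{s+k},$$
so the full Mellin integral continues meromorphically to $\mathbb C$ with simple poles at $s=-k$, $k\in\mathbb N_{0}$, of residue $(-1)^{k}E_{N,k}(x;\bar\omega)/(2^{N}k!)$.

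Finally, dividing by $\Gamma(s)$, whose simple pole at $s=-k$ has residue $(-1)^{k}/k!$, both poles cancel and the quotient is holomorphic at $s=-k$ with value
$$\zeta_{E,N}(-k,x;\bar\omega)=\frac{(-1)^{k}E_{N,k}(x;\bar\omega)/(2^{N}k!)}{(-1)^{k}/k!}=\frac{E_{N,k}(x;\bar\omega)}{2^{N}}.$$
Packaging these values into a formal power series in $t$ and reinserting \eqref{Eu-poly-def} recovers the generating function identity in the statement. The one point needing real care is the termwise integration on $(0,r)$, which is immediate from uniform convergence of the Taylor expansion on any compact subinterval; the rest of the argument amounts to routine bookkeeping of the residues of $\Gamma(s)$ and of the Mellin integral.
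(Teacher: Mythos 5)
Your proof is correct, but it follows a genuinely different route from the paper's. The paper works directly at the level of series: it expands $2^{N}e^{xt}/\prod_{j}(e^{\omega_{j}t}+1)$ as the multiple alternating exponential series $\sum_{\bar m}(-1)^{|\bar m|}e^{(x+\bar\omega\cdot\bar m)t}$ (summed ``in the wider sense,'' i.e.\ Abel-summed), formally sets $s=-k$ in the defining series \eqref{B-E-def} to get $\sum_{\bar m}(-1)^{|\bar m|}(x+\bar\omega\cdot\bar m)^{k}$, and identifies this with $\frac{1}{2^{N}}\bigl(\frac{d}{dt}\bigr)^{k}$ of the generating function at $t=0$, whence $\frac{1}{2^{N}}E_{N,k}(x;\bar\omega)$. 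You instead run the classical Riemann--Hurwitz argument on the Mellin representation \eqref{B-E-def-3}: split at $r$, Taylor-expand the integrand via \eqref{Eu-poly-def} with $t\mapsto -t$, integrate termwise to exhibit the simple poles of the integral at $s=-k$ with residue $(-1)^{k}E_{N,k}(x;\bar\omega)/(2^{N}k!)$, and cancel against the pole of $\Gamma(s)$ of residue $(-1)^{k}/k!$. Your residue bookkeeping and the radius-of-convergence claim ($\min_{j}\pi/\omega_{j}$, from the nearest zero of $\prod_{j}(1+e^{-\omega_{j}t})$) are both correct, and the final quotient $E_{N,k}(x;\bar\omega)/2^{N}$ matches the lemma. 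What your approach buys is rigor: the value at $s=-k$ is obtained directly from the analytic continuation furnished by \eqref{B-E-def-3}, rather than from a formal substitution of $s=-k$ into a divergent series whose Abel-summed value must implicitly be identified with the continued value. What the paper's approach buys is brevity and the explicit link to the multiple alternating series, which makes identities such as Lemma \ref{ele-eu-poly} (proved by the same series manipulations) come out uniformly. Both yield the same generating-function identity in the statement.
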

\begin{proof}
Applying the procedure  used to get uniformly convergent in the wider sense of the multiple series, we obtain the following
generating function of $E_{N,n}(x;\bar\omega)$:
\begin{equation}\label{be-eu}
\frac{2^Ne^{xt}}{\prod_{j=1}^N\left(e^{\omega_jt}+1\right)}=\sum_{m_1=0}^\infty\cdots\sum_{m_N=0}^\infty(-1)^{m_1+\cdots+m_N}e^{(x+\omega_1m_1+\cdots+\omega_Nm_N)t}.
\end{equation}
Letting $s=-k~(k\in\mathbb{N}_{0})$ in ~(\ref{B-E-def}), we get
\begin{equation}\label{be-eu-zeta}
\zeta_{E,N}(-k,x;\bar\omega)=\sum_{t_1=0}^\infty\cdots\sum_{t_N=0}^\infty(-1)^{t_1+\cdots+t_N}(x+\omega_1t_1+\cdots+\omega_Nt_N)^k.
\end{equation}
Since
$\left(\frac{d}{dt}\right)^ke^{(x+m_1\omega_1+\cdots+m_N\omega_N)t}\left|_{t=0}\right.=(x+\omega_1m_1+\cdots+\omega_Nm_N)^k,$
it is clear from (\ref{Eu-poly-def}) and (\ref{be-eu-zeta}) that
\begin{equation}\label{zeta-val-ca}
\begin{aligned}
\zeta_{E,N}(-k,x;\bar\omega)&=\left(\frac{d}{dt}\right)^k\left(\sum_{k=0}^\infty\zeta_{E,N}(-k,x;\bar\omega)\frac{t^k}{k!}\right)\\&=\left(\frac{d}{dt}\right)^k\sum_{m_1=0}^\infty\cdots\sum_{m_N=0}^\infty(-1)^{m_1+\cdots+m_N}e^{(x+\omega_1m_1+\cdots+\omega_Nm_N)t}\biggl|_{t=0} \\
&=\frac1{2^N}\left(\frac{d}{dt}\right)^k\frac{2^Ne^{xt}}{\prod_{j=1}^N\left(e^{\omega_jt}+1\right)}\biggl|_{t=0} \\
&=\frac1{2^N}\left(\frac{d}{dt}\right)^k\sum_{n=0}^\infty E_{N,n}(x;\bar\omega)\frac{t^n}{n!}\biggl|_{t=0},
\end{aligned}
\end{equation}
so that
\begin{equation}
\zeta_{E,N}(-k,x;\bar\omega)=\frac1{2^N}E_{N,k}(x;\bar\omega).
\end{equation}
This is our assertion for the value of $\zeta_{E,N}(s,x;\bar\omega)$ at non-positive integers.
\end{proof}

\begin{lemma}\label{ele-eu-poly}
The following identities hold:
\begin{enumerate}
\item For an even integer $M,$ we have
$$\begin{aligned}
\sum_{j=0}^{M-1}&(-1)^jE_{K,n+1}(x+j\omega_{K+1};\omega_1,\ldots,\omega_K) \\
&=\frac{1}{2}(E_{K+1,n+1}(x;\omega_1,\ldots,\omega_{K+1})-E_{K+1,n+1}(x+M\omega_{K+1};\omega_1,\ldots,\omega_{K+1})).
\end{aligned}$$
\item For an odd integer $M,$ we have
$$\begin{aligned}
\sum_{j=0}^{M-1}&(-1)^jE_{K,n+1}(x+j\omega_{K+1};\omega_1,\ldots,\omega_K) \\
&=\frac{1}{2}(E_{K+1,n+1}(x;\omega_1,\ldots,\omega_{K+1})+E_{K+1,n+1}(x+M\omega_{K+1};\omega_1,\ldots,\omega_{K+1})).
\end{aligned}$$
\end{enumerate}
\end{lemma}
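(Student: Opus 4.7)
The natural strategy is to prove both identities in parallel via generating functions, since the defining identity (\ref{Eu-poly-def}) makes the dependence on $x$ purely exponential, and a finite alternating sum translates into a geometric series. Specifically, I would multiply the defining generating function for $E_{K,n}(x+j\omega_{K+1};\omega_1,\ldots,\omega_K)$ by $(-1)^j$ and sum over $j=0,\ldots,M-1$.

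First, from (\ref{Eu-poly-def}) one has
$$
\sum_{n=0}^\infty E_{K,n}(x+j\omega_{K+1};\omega_1,\ldots,\omega_K)\frac{t^n}{n!}=\frac{2^K e^{(x+j\omega_{K+1})t}}{\prod_{i=1}^K\left(e^{\omega_it}+1\right)}.
$$
Multiplying by $(-1)^j$, summing $j$ from $0$ to $M-1$, and factoring out $e^{xt}$ reduces the task to evaluating the geometric sum $\sum_{j=0}^{M-1}(-e^{\omega_{K+1}t})^j=\bigl(1-(-1)^M e^{M\omega_{K+1}t}\bigr)/\bigl(1+e^{\omega_{K+1}t}\bigr)$. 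Depending on the parity of $M$, the numerator is either $1-e^{M\omega_{K+1}t}$ or $1+e^{M\omega_{K+1}t}$.

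Next I would combine the denominator $1+e^{\omega_{K+1}t}$ with the existing product to form $\prod_{i=1}^{K+1}(e^{\omega_it}+1)$, and insert a factor of $2/2$ to convert $2^K$ into $2^{K+1}$. The result is
$$
\sum_{j=0}^{M-1}(-1)^j\sum_{n=0}^\infty E_{K,n}(x+j\omega_{K+1};\omega_1,\ldots,\omega_K)\frac{t^n}{n!}
=\frac{1}{2}\cdot\frac{2^{K+1}\bigl(e^{xt}\mp e^{(x+M\omega_{K+1})t}\bigr)}{\prod_{i=1}^{K+1}(e^{\omega_it}+1)},
$$
with the minus sign for even $M$ and the plus sign for odd $M$. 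Recognizing each term on the right as a generating function for $E_{K+1,n}$ with parameter vector $(\omega_1,\ldots,\omega_{K+1})$ and extracting coefficients of $t^n/n!$ yields exactly the claimed identities (with $n+1$ in place of $n$, since $n$ is arbitrary).

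There is no real obstacle here; the only care needed is the bookkeeping of the parity of $M$ in evaluating the geometric sum, which is what produces the sign difference between the two cases of the lemma. Everything else is formal manipulation of absolutely convergent power series in a neighborhood of $t=0$, so comparing coefficients is fully justified.
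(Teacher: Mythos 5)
Your proposal is correct, and it takes a genuinely different route from the paper. The paper works with the series representation of the higher-order Euler polynomials coming from Lemma \ref{val-ne} and equation (\ref{be-eu-zeta}), namely $E_{K+1,n+1}(x;\bar\omega)=2^{K+1}\sum_{t_1,\ldots,t_{K+1}\ge 0}(-1)^{t_1+\cdots+t_{K+1}}(x+\omega_1t_1+\cdots+\omega_{K+1}t_{K+1})^{n+1}$: it shifts the last summation index by $M$ (which produces the factor $(-1)^M$ responsible for the two cases), splits the series for $E_{K+1,n+1}(x;\bar\omega)$ into the tail $t_{K+1}\ge M$ plus the finite block $0\le t_{K+1}\le M-1$, and identifies the finite block with $2\sum_{j=0}^{M-1}(-1)^jE_{K,n+1}(x+j\omega_{K+1};\omega_1,\ldots,\omega_K)$. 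Your argument instead stays entirely at the level of the generating function (\ref{Eu-poly-def}), where the alternating sum over $j$ becomes the geometric sum $\sum_{j=0}^{M-1}(-e^{\omega_{K+1}t})^j=(1-(-1)^Me^{M\omega_{K+1}t})/(1+e^{\omega_{K+1}t})$ and the parity of $M$ enters through the numerator; the extra denominator factor is exactly what upgrades the order from $K$ to $K+1$. Your sign bookkeeping checks out against both cases of the lemma. What your approach buys is rigor at lower cost: the paper's series $\sum(-1)^{t_1+\cdots+t_{K+1}}(x+\cdots)^{n+1}$ is not absolutely convergent and must be interpreted via Abel summation (the ``uniformly convergent in the wider sense'' caveat in the proof of Lemma \ref{val-ne}), whereas your manipulation of power series in a neighborhood of $t=0$ is unconditional. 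What the paper's approach buys is uniformity with the rest of Section 2, where the same series representation is the bridge between the zeta values $\zeta_{E,N}(-k,x;\bar\omega)$ and the polynomials; the index-shifting argument is then the natural companion computation. Either proof is acceptable.
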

\begin{proof}
Suppose that $M$ is an odd integer.
To see Part (2), from (\ref{be-eu-zeta}) and Lemma \ref{val-ne}, note that
\begin{equation}\label{pf-e-e}
\begin{aligned}
&E_{K+1,n+1}(x+M\omega_{K+1};\omega_1,\ldots,\omega_{K+1}) \\
&=2^{K+1}\sum_{t_1=0}^\infty\cdots\sum_{t_{K+1}=0}^\infty(-1)^{t_1+\cdots+t_{K+1}}(x+\omega_1t_1+\cdots+\omega_{K+1}(t_{K+1}+M))^{n+1} \\
&=-2^{K+1}\sum_{t_1=0}^\infty\cdots\sum_{t_{K+1}=M}^\infty(-1)^{t_1+\cdots+t_{K+1}}(x+\omega_1t_1+\cdots+\omega_{K+1}t_{K+1})^{n+1}
\end{aligned}
\end{equation}
and
\begin{equation}\label{pf-e-e-2}
\begin{aligned}
&E_{K+1,n+1}(x;\omega_1,\ldots,\omega_{K+1}) \\
&=2^{K+1}\sum_{t_1=0}^\infty\cdots\sum_{t_{K+1}=0}^\infty(-1)^{t_1+\cdots+t_{K+1}}(x+\omega_1t_1+\cdots+\omega_{K+1}t_{K+1})^{n+1} \\
&=2^{K+1}\sum_{t_1=0}^\infty\cdots\sum_{t_{K+1}=M}^\infty(-1)^{t_1+\cdots+t_{K+1}}(x+\omega_1t_1+\cdots+\omega_{K+1}t_{K+1})^{n+1} \\
&\quad+2^{K+1}\sum_{t_1=0}^\infty\cdots\sum_{t_{K+1}=0}^{M-1}(-1)^{t_1+\cdots+t_{K+1}}(x+\omega_1t_1+\cdots+\omega_{K+1}t_{K+1})^{n+1}.
\end{aligned}
\end{equation}
On the other hand, we have
\begin{equation}\label{pf-e-e-3}
\begin{aligned}
&2^{K+1}\sum_{t_1=0}^\infty\cdots\sum_{t_{K+1}=0}^{M-1}(-1)^{t_1+\cdots+t_{K+1}}(x+\omega_1t_1+\cdots+\omega_{K+1}t_{K+1})^{n+1}\\
&=2\sum_{j=0}^{M-1}(-1)^jE_{K,n+1}(x+j\omega_{K+1};\omega_1,\ldots,\omega_{K}).
\end{aligned}
\end{equation}
Thus, Part (2) follows from (\ref{pf-e-e}), (\ref{pf-e-e-2}) and (\ref{pf-e-e-3}).
The calculation of Part (1) is similar.
\end{proof}

\section{$p$-Adic multiple Barnes-Euler zeta and multiple $p$-adic Diamond-Euler Log Gamma functions}

In this section, we define the $p$-adic analogue of the multiple Barnes-Euler zeta function $\zeta_{E,N}(s,x;\bar\omega)$
and the corresponding $p$-adic Log Gamma functions, and we also study their properties.

For a vector $\bar\o=(\o_1,\ldots,\o_N)\in\mathbb C_p^N$ we define its norm $\|\bar\o\|_p$ by
$$\|\bar\o\|_p=\max\{|\o_1|_p,\ldots,|\o_N|_p\}.$$
The multiple fermionic $p$-adic integrals considered here are defined as iterated integrals. At the $k$-th iteration, with
$1\leq k\leq N,$ we integrate
$$\int_{\mathbb Z_p}F_k(t_k,t_{k+1},\ldots,t_N)d\mu_{-1}(t_k),$$
give that $F_k(t_k,t_{k+1},\ldots,t_N)$ are continuous functions on $\mathbb{Z}_{p}$ for each fixed vector $(t_{k+1},\ldots,t_N)\in\mathbb Z_p^{N-k}.$
Under these conditions, we use the notation
\begin{equation}\label{mul-int-def}
\int_{\mathbb Z_p^N}f(\bar t\,)d\mu_{-1}(\bar t\,), \quad\text{where } \bar t =(t_1,\ldots,t_N),
\end{equation}
to denote the $N$-fold iterated fermionic $p$-adic integral
\begin{equation}\label{mul-int-def-2}
\begin{aligned}
\int_{\mathbb Z_p^N}f(\bar t\,)d\mu_{-1}(\bar t\,)=\lim_{l_1\to\infty}\cdots\lim_{l_N\to\infty }
\sum_{t_1=0}^{p^{l_1}-1}\cdots\sum_{t_N=0}^{p^{l_N}-1}f(t_1,\ldots,t_N)(-1)^{t_1+\cdots+t_N}.
\end{aligned}
\end{equation}

\begin{lemma}\label{lem-1}
For any $x\in\mathbb C_p,$ $\omega_i\in\mathbb C_p^\times \;(i=1,\ldots,N)$ and $1\leq k\leq N,$ we have
$$\begin{aligned}
\int_{\mathbb Z_p}E_{k-1,n}(x+\omega_kt_k&+\cdots+\omega_Nt_N;\omega_1,\ldots,\omega_{k-1})d\mu_{-1}(t_k) \\
&=E_{k,n}(x+\omega_{k+1}t_{k+1}+\cdots+\omega_Nt_N;\omega_1,\ldots,\omega_{k}),
\end{aligned}$$
where $E_{0,n}(x;-)=x^n$ and $\omega_{k+1}t_{k+1}+\cdots+\omega_Nt_N=0$ when $k=N.$
\end{lemma}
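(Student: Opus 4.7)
The plan is to prove this Witt-type formula by passing to generating functions. Fix the parameters $t_{k+1},\ldots,t_N$, set $y = x + \omega_{k+1}t_{k+1}+\cdots+\omega_Nt_N$, and observe that the claim reduces, for each $n\in\mathbb N_0$, to the single-variable identity
\[
\int_{\mathbb Z_p} E_{k-1,n}(y+\omega_k u;\omega_1,\ldots,\omega_{k-1})\,d\mu_{-1}(u) = E_{k,n}(y;\omega_1,\ldots,\omega_k).
\]
Since for each fixed $n$ the integrand is a polynomial of degree $n$ in $u$, both sides are polynomials in $y$ and the $\omega_i$, so it suffices to verify the identity as an equality of formal power series in an auxiliary indeterminate $t$.

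First, I would apply the fundamental difference equation (\ref{-q-e-diff}) of the fermionic integral to $f(u) = e^{au}$: the relation $e^a I + I = 2$ yields
\[
\int_{\mathbb Z_p} e^{au}\,d\mu_{-1}(u) = \frac{2}{e^a+1}.
\]
Next, using the generating function (\ref{Eu-poly-def}) for $E_{k-1,n}(\,\cdot\,;\omega_1,\ldots,\omega_{k-1})$, I would multiply the target identity by $t^n/n!$ and sum over $n \ge 0$. Interchanging summation and integration on the left (legitimate because the coefficient of each $t^n$ involves only finitely many monomial integrals in $u$) transforms the left-hand side into
\[
\frac{2^{k-1}\, e^{yt}}{\prod_{j=1}^{k-1}(e^{\omega_j t}+1)}\cdot \int_{\mathbb Z_p} e^{\omega_k u t}\,d\mu_{-1}(u).
\]

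Applying the formula from the first step with $a=\omega_k t$ turns this expression into
\[
\frac{2^k\, e^{yt}}{\prod_{j=1}^{k}(e^{\omega_j t}+1)} = \sum_{n=0}^\infty E_{k,n}(y;\omega_1,\ldots,\omega_k)\frac{t^n}{n!}
\]
by (\ref{Eu-poly-def}), and comparing coefficients of $t^n/n!$ delivers the claim. The only potential obstacle, the termwise integration, is minor: since $E_{k-1,n}$ has degree $n$ in its first variable, the coefficient of $t^n/n!$ on both sides depends only on the finitely many fermionic moments $\int_{\mathbb Z_p} u^m\,d\mu_{-1}(u)$ for $0\le m\le n$, so the generating function identity is really a compact bookkeeping for a finite algebraic equality and involves no analytic convergence subtlety.
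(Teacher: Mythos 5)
Your proof is correct, but it takes a genuinely different route from the paper's. The paper works directly from the definition (\ref{-q-e2}) of the fermionic integral as $\lim_{l\to\infty}\sum_{t_k=0}^{p^l-1}(-1)^{t_k}(\cdots)$: it evaluates the partial alternating sum in closed form using Lemma \ref{ele-eu-poly}(2) with the odd modulus $M=p^l$, obtaining $\tfrac12\bigl(E_{k,n}(\cdot)+E_{k,n}(\cdot+p^l\omega_k)\bigr)$, and then lets $l\to\infty$, using $p^l\omega_k\to 0$ and the continuity of the polynomial $E_{k,n}$. You instead package everything into generating functions, reducing the claim to the Witt-type identity $\int_{\mathbb Z_p}e^{au}\,d\mu_{-1}(u)=2/(e^a+1)$ obtained from the difference equation (\ref{-q-e-diff}), and then multiply by $\frac{2^{k-1}e^{yt}}{\prod_{j<k}(e^{\omega_jt}+1)}$ and compare coefficients via (\ref{Eu-poly-def}); your closing remark correctly disposes of the only delicate point, namely that the termwise integration is a finite algebraic statement in each degree. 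What each approach buys: the paper's argument exhibits the partial alternating sums explicitly, but it leans on Lemma \ref{ele-eu-poly}, which is proved there by manipulating the archimedean series (\ref{be-eu-zeta}) for positive real parameters and is then invoked for $\omega_i\in\mathbb C_p^\times$ --- a step that tacitly uses the fact that both sides are polynomial identities in $x$ and the $\omega_i$; your generating-function argument is purely formal, sidesteps that extension entirely, and is self-contained modulo the standard moment formula, at the cost of not displaying the limit structure that the paper reuses elsewhere.
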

\begin{proof}
By (\ref{-q-e2}) and Lemma \ref{ele-eu-poly}(2), we have
$$\begin{aligned}
\int_{\mathbb Z_p}&E_{k-1,n}(x+\omega_kt_k+\cdots+\omega_Nt_N;\omega_1,\ldots,\omega_{k-1})d\mu_{-1}(t_k) \\
&=\lim_{l\to\infty}\sum_{t_k=0}^{p^{l}-1}(-1)^{t_k}E_{k-1,n}(x+\omega_{k+1}t_{k+1}+\cdots+\omega_Nt_N+\omega_kt_k;\omega_1,\ldots,\omega_{k-1}) \\
&=\lim_{l\to\infty}\frac12(E_{k,n}(x+\omega_{k+1}t_{k+1}+\cdots+\omega_Nt_N;\omega_1,\ldots,\omega_{k}) \\
&\qquad\qquad +E_{k,n}(x+\omega_{k+1}t_{k+1}+\cdots+\omega_Nt_N+p^l\omega_k;\omega_1,\ldots,\omega_{k})) \\
&=E_{k,n}(x+\omega_{k+1}t_{k+1}+\cdots+\omega_Nt_N;\omega_1,\ldots,\omega_{k})
\end{aligned}$$
as desired.
\end{proof}

\begin{corollary}\label{co-lem-1}
For any $x\in\mathbb C_p$ and $\omega_i\in\mathbb C_p^\times \;(i=1,\ldots,N),$ we have
$$\int_{\mathbb Z_p^N}(x+\bar\omega\cdot\bar t\,)^nd\mu_{-1}(\bar t\,)=E_{N,n}(x;\bar\omega),$$
where $n\in\mathbb N_0.$
\end{corollary}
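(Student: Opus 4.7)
The plan is to obtain Corollary \ref{co-lem-1} as a direct iterated application of Lemma \ref{lem-1}, using the definition of the multiple fermionic $p$-adic integral as an iterated integral given in (\ref{mul-int-def-2}). The starting observation is that by the convention $E_{0,n}(x;-)=x^n,$ the integrand can be written as
$$(x+\bar\omega\cdot\bar t\,)^n = E_{0,n}(x+\omega_1 t_1+\cdots+\omega_N t_N;-),$$
which is exactly the $k=1$ case of the function appearing on the left hand side of Lemma \ref{lem-1}.

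Next I would perform the iterated integration in the order $t_1,t_2,\ldots,t_N.$ At the first stage, applying Lemma \ref{lem-1} with $k=1$ transforms the inner integral over $t_1$ into $E_{1,n}(x+\omega_2 t_2+\cdots+\omega_N t_N;\omega_1).$ Feeding this into the next iteration and applying Lemma \ref{lem-1} with $k=2$ produces $E_{2,n}(x+\omega_3 t_3+\cdots+\omega_N t_N;\omega_1,\omega_2),$ and so on. A straightforward induction on $k$ (or equivalently a finite sequence of $N$ applications of the lemma) then yields, after the $N$-th integration with respect to $t_N,$
$$\int_{\mathbb Z_p^N}(x+\bar\omega\cdot\bar t\,)^n\,d\mu_{-1}(\bar t\,) = E_{N,n}(x;\omega_1,\ldots,\omega_N) = E_{N,n}(x;\bar\omega).$$

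There is essentially no obstacle: the only point that requires a brief check is that at each stage the integrand satisfies the continuity (in fact polynomial) hypothesis needed for (\ref{mul-int-def-2}), so that the iterated integral is well-defined and each inner integral may indeed be computed by the single-variable formula of Lemma \ref{lem-1} with the remaining variables $t_{k+1},\ldots,t_N$ held fixed. Since $E_{k-1,n}$ is a polynomial in its first argument, this continuity is automatic, and the induction goes through cleanly.
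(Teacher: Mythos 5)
Your proposal is correct and follows essentially the same route as the paper: both start from the identification $(x+\bar\omega\cdot\bar t\,)^n=E_{0,n}(x+\bar\omega\cdot\bar t\,;-)$ and then apply Lemma \ref{lem-1} iteratively for $k=1,2,\ldots,N$ to build up $E_{N,n}(x;\bar\omega)$. Your added remark that the polynomial nature of $E_{k-1,n}$ guarantees the continuity needed for each iterated integral is a harmless (and sensible) refinement that the paper leaves implicit.
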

\begin{proof}
If we set $k=1$ in Lemma \ref{lem-1}, we have
\begin{equation}\label{p-euler}
\begin{aligned}
\int_{\mathbb Z_p}(x+\omega_1t_1&+\cdots+\omega_N t_N)^nd\mu_{-1}(t_1) \\
&=\int_{\mathbb Z_p}E_{0,n}(x+\omega_1t_1+\cdots+\omega_N t_N;-)d\mu_{-1}(t_1) \\
&=E_{1,n}(x+\omega_2t_2+\cdots+\omega_N t_N;\omega_1)
\end{aligned}
\end{equation}
since $E_{0,n}(x;-)=x^n.$ Using (\ref{p-euler}), if we apply Lemma \ref{lem-1} inductively for $k=2,\ldots,N,$ we obtain
$$\begin{aligned}
\int_{\mathbb Z_p}\int_{\mathbb Z_p}(x+\omega_1t_1&+\cdots+\omega_N t_N)^nd\mu_{-1}(t_1)\mu_{-1}(t_2) \\
&=\int_{\mathbb Z_p}E_{1,n}(x+\omega_2t_2+\cdots+\omega_N t_N;\omega_1)d\mu_{-1}(t_2) \\
&=E_{2,n}(x+\omega_3t_3+\cdots+\omega_N t_N;\omega_1, \omega_2).
\end{aligned}$$
Hence the assertion is clear.
\end{proof}
The projection  function $\langle x\rangle$ for all $x\in
\mathbb{C}_p^{\times}$  defined by Kashio \cite{Kashio} and
Tangedal-Young \cite{TY} will play a key role in our definitions.  
We recall the definition of $\langle x\rangle$ in the following paragraph.

Fixing an embedding of $\overline{\mathbb{Q}}$ into $\mathbb{C}_{p}$.
$p^{\mathbb{Q}}$ denotes the image in $\mathbb{C}_{p}^{\times}$  of
the set of positive real rational powers of $p$ under this
embedding, $\mu$ denotes the group of roots of unity in
$\mathbb{C}_{p}^{\times}$ of order not divisible by $p$. For
$x\in\mathbb{C}_{p}$, $|x|_{p}=1$, there exists a unique elements
$\hat{x}\in\mu$ such that $|x-\hat{x}|_{p} < 1$ (called the
Teichm\"uller representative of $x$); it may be defined by
$\hat{x}=\lim_{n\to\infty}x^{p^{n!}}$. We extend this definition to
$x\in\mathbb{C}_{p}^{\times}$ by
\begin{equation}
\hat{x}:=(\widehat{x/p^{v_{p}(x)}}),
\end{equation}
that is, we define $\hat{x}=\hat{u}$ if $x=p^{r}u$ with $p^{r}\in
p^{\mathbb{Q}}$ and $|u|_{p}=1$, then we define the function
$\langle\cdot\rangle$ on $\mathbb{C}_{p}^{\times}$ by
\begin{equation}\label{p-brac}
\langle x\rangle=p^{-v_{p}(x)}x/\hat{x}.
\end{equation}
For $x\in\mathbb C_p$ with $|x|_p<1$ the Iwasawa logarithm function  is defined by the usual power series
\begin{equation}\label{p-log}
\log_px=-\sum_{n=1}^\infty\frac{(1-x)^n}{n}
\end{equation}
and is extended to a continuous function on $\mathbb C_p^\times$ by defining $\log_p x=\log_p\langle x\rangle.$

Suppose that $\omega_1,\ldots,\omega_N\in\mathbb C_p^\times,$ and let $\Lambda$ denote the $\mathbb Z_p$-linear span of $\{\omega_1,\ldots,\omega_N\}.$
For all $x\in\mathbb C_p\setminus\Lambda,$ we define the $p$-adic multiple Barnes-Euler zeta function $\zeta_{p,E,N}(s,x;\bar\omega)$ by
\begin{equation}\label{mul-def-E-zeta}
\zeta_{p,E,N}(s,x;\bar\omega)=\int_{\mathbb Z_p^N}
\langle x+\bar\omega\cdot \bar t\, \rangle^{1-s} d\mu_{-1}(\bar t\,).
\end{equation}
Here, the function with $N=0$ is understood to be `there is no parameter vector $\bar\omega$.'
This suggests that the integral
\begin{equation}\label{E-zeta0}
\zeta_{p,E,0}(s,x;-)=\int_{\mathbb Z_p^N}
\langle x \rangle^{1-s} d\mu_{-1}(t)=\langle x \rangle^{1-s}
\end{equation}
is derived easily from the relation $\int_{\mathbb Z_p}d\mu_{-1}(t)=1$ (see \cite[Propositin 2,1]{KS}).

By \cite[Proposition 2.1]{TY} and (\ref{mul-def-E-zeta}), we have the following result on the analytic  of $\zeta_{p,E,N}(s,x;\bar\omega)$ in certain areas for $s\in\mathbb{C}_{p}$  and $x\in\mathbb{C}_p\setminus\Lambda$.

\begin{proposition}\label{analytic}
For any choices of $\o_1,\ldots,\o_N,x\in\mathbb C_p^\times$ with $x\not\in\Lambda$ the function $\zeta_{p,E,N}(s,x;\bar\o)$ is a $C^\infty$ function
of s on $\mathbb Z_p,$ and is an analytic function of s on a disc of positive radius about $s = 0;$ on this disc it is
locally analytic as a function of $x$ and independent of the choice made to define the $\langle \cdot\rangle$ function. If
$\o_1,\ldots,\o_N,x$ are so chosen to lie in a finite extension $K$ of $\mathbb Q_p$ whose ramification index over $\mathbb Q_p$ is less than $p-1$ then
$\zeta_{p,E,N}(s,x;\bar\o)$ is analytic for $s\in\mathbb C_p$ such that $|s|_p <|\pi|_p^{-1}p^{-1/(p-1)},$
where $(\pi)$ is the maximal ideal of the ring of integers $O_K$ of $K.$
If $s\in \mathbb Z_p,$ the function $\zeta_{p,N}(s,x;\bar\o)$ is locally analytic as a function of $x$ on $\mathbb C_p\setminus\Lambda.$
\end{proposition}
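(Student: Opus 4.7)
The plan is to adapt the argument that Tangedal and Young use for the corresponding statement in the Barnes--Volkenborn setting \cite[Proposition 2.1]{TY}, replacing the Volkenborn integral throughout by the fermionic integral $d\mu_{-1}$. Their proof depends only on two properties of the measure: that it integrates continuous (and locally analytic) functions on $\mathbb Z_p^N$, and that it may be interchanged with uniformly convergent series of such functions. Both hold for $d\mu_{-1}$, so the overall strategy transfers with essentially no change.

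First I would verify that, for $x\in\mathbb C_p\setminus\Lambda$, the map $\bar t\mapsto x+\bar\omega\cdot\bar t$ avoids $0$ on the compact set $\mathbb Z_p^N$; since $\Lambda$ is closed, this gives a uniform lower bound $|x+\bar\omega\cdot\bar t|_p\geq c>0$, and hence a uniform upper bound on $|\log_p\langle x+\bar\omega\cdot\bar t\rangle|_p$, so that $\bar t\mapsto\langle x+\bar\omega\cdot\bar t\rangle$ is a well-defined continuous function on $\mathbb Z_p^N$. Fixing $s_0\in\mathbb Z_p$ and writing $s=s_0+u$, I would then expand
\[
\langle x+\bar\omega\cdot\bar t\rangle^{1-s}
=\langle x+\bar\omega\cdot\bar t\rangle^{1-s_0}\sum_{k=0}^\infty\frac{(-u)^k}{k!}\bigl(\log_p\langle x+\bar\omega\cdot\bar t\rangle\bigr)^k,
\]
which, thanks to that uniform bound, converges uniformly in $\bar t\in\mathbb Z_p^N$ for $u$ in a small enough disc. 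Interchanging sum and iterated fermionic integral then presents $\zeta_{p,E,N}(s,x;\bar\omega)$ as a convergent power series in $u=s-s_0$ on a neighborhood of each $s_0\in\mathbb Z_p$, yielding both the $C^\infty$ property on $\mathbb Z_p$ and analyticity on a disc about $s=0$.

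For the quantitative statement, if $\omega_i,x\in K$ with ramification index $e<p-1$ and uniformizer $\pi$, then $\log_p\langle y\rangle\in(\pi)$ for every unit $y\in O_K^\times$, which gives the uniform estimate $|\log_p\langle x+\bar\omega\cdot\bar t\rangle|_p\leq|\pi|_p$; combined with the standard convergence range $|\cdot|_p<p^{-1/(p-1)}$ of $\exp_p$, this forces precisely $|u|_p<|\pi|_p^{-1}p^{-1/(p-1)}$, and specializing $s_0=0$ gives the stated disc. For local analyticity in $x$, fix $x_0\in\mathbb C_p\setminus\Lambda$ and, for $x$ close enough to $x_0$, write
\[
\log_p\langle x+\bar\omega\cdot\bar t\rangle
=\log_p\langle x_0+\bar\omega\cdot\bar t\rangle+\log_p\!\left(1+\frac{x-x_0}{x_0+\bar\omega\cdot\bar t}\right)
\]
and expand the second term as a convergent power series in $x-x_0$; substituting into the $u$-expansion and regrouping gives a power series in $(x-x_0)$ whose coefficients are fermionic integrals of polynomials in $\log_p\langle x_0+\bar\omega\cdot\bar t\rangle$ and $(x_0+\bar\omega\cdot\bar t)^{-1}$. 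Independence from the choice defining $\langle\cdot\rangle$ is automatic because two admissible choices differ by a locally constant root of unity in $\mu$ that is annihilated by $\log_p$.

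The main obstacle I anticipate is justifying the interchange of the iterated fermionic integral with the double series in $u$ and $(x-x_0)$ on the correct region. This ultimately rests on a single uniform bound on $|\log_p\langle x+\bar\omega\cdot\bar t\rangle|_p$ as $\bar t$ ranges over the compact set $\mathbb Z_p^N$, which follows from continuity and compactness in the general case and from the integrality of $O_K$ in the ramified case. Once this bound is established, the remainder is bookkeeping of radii of convergence, paralleling the Volkenborn-case computation in \cite{TY}.
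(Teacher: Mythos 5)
Your proposal is correct and takes essentially the same route as the paper: the paper offers no argument beyond invoking \cite[Proposition 2.1]{TY} together with the defining integral (\ref{mul-def-E-zeta}), and what you write out is precisely that argument transported from the Volkenborn integral to the fermionic integral $d\mu_{-1}$, whose only relevant properties (integration of continuous/locally analytic functions and interchange with uniformly convergent series) you correctly identify. The uniform bound on $|\log_p\langle x+\bar\omega\cdot\bar t\,\rangle|_p$ over the compact set $\mathbb Z_p^N$ and the $\exp_p$-expansion in $s-s_0$ are exactly the ingredients of the cited result, so no further comparison is needed.
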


\begin{theorem}\label{e-zeta-thm1}
The function $\zeta_{p,E,N}(s,x;\bar\omega)$ has the following properties:
\begin{enumerate}
\item
For all $x\in\mathbb C_p\setminus\Lambda$ the function $\zeta_{p,E,N}(s,x;\bar\omega)$ satisfies the difference equation
$$\zeta_{p,E,N}(s,x+\omega_N;\bar\omega)+\zeta_{p,E,N}(s,x;\bar\omega)=2\zeta_{p,E,N-1}(s,x;\omega_1,\ldots,\omega_{N-1}),$$
with the convention $\zeta_{p,E,0}(s,x;-)=\langle x\rangle^{1-s}$ (see (\ref{E-zeta0})) and $\bar\omega =(\omega_1,\ldots,\omega_N).$
\item
For all $c\in\mathbb C_p^\times$ and all $x\in\mathbb C_p\setminus\Lambda$ we have
$$\zeta_{p,E,N}(s,cx;c\bar\omega)=\la c\ra^{1-s}\zeta_{p,E,N}(s,x;\bar\omega).$$
\item For all $x\in\mathbb C_p\setminus\Lambda$ we have the reflection function equation
$$\zeta_{p,E,N}(s,|\bar\o|-x;\bar\omega)=\zeta_{p,E,N}(s,x;\bar\omega).$$
Here we will write $|\bar\o|=\omega_1+\cdots+\o_N.$
\item For all $x\in\mathbb C_p\setminus\Lambda$ and all positive odd integers $m$ we have the multiplication formula (distribution relation)
$$\zeta_{p,E,N}(s,x;\bar\omega)=\la m\ra^{1-s}
\sum_{0\leq j_i<m}(-1)^{|\bar j|}\zeta_{p,E,N}\left(s,\frac{x+\bar j\cdot\bar\o}{m};\bar\omega\right),$$
where the sum is over all vectors $\bar j=(j_1,\ldots,j_N)$ with $0\leq j_i<m.$
In particular for any $k\in\mathbb N$ we have
$$\zeta_{p,E,N}(s,x;\bar\omega)=\sum_{0\leq j_i<p^k}(-1)^{|\bar j|}\zeta_{p,E,N}\left(s,\frac{x+\bar j\cdot\bar\o}{p^k};\bar\omega\right).$$
\item Suppose that $\o_1,\ldots,\o_N\in\bar{\mathbb Q}$ are positive real numbers and $x\in\bar{\mathbb Q}$ is a complex number with positive
real part. Under our fixed embedding of $\bar{\mathbb Q}$ into $\mathbb C_p,$ suppose that $|x|_p>\|\bar\o\|_p.$ Then for all $k\in\mathbb N,$ we have
$$\zeta_{p,E,N}(1-k,x;\bar\omega)=2^N\left(\frac{\la x\ra}{x}\right)^k \zeta_{E,N}(-k,x;\bar\omega).$$
\item
Suppose $\o_1,\ldots,\o_N,x\in\mathbb C_p^\times$ and $|x|_p>\|\bar\o\|_p.$ Then for any $m\in\mathbb N_0,$ the identity
$$\frac{\partial^m}{\partial x^m}\zeta_{p,E,N}(s,x;\bar\omega)=(-1)^m \left(\frac{\la x\ra}{x}\right)^m
(s-1)_m\zeta_{p,E,N}(s+m,x;\bar\omega)$$
holds if $s\in\mathbb Z_p;$ this identity also holds for $|s|_p<|\pi|_p^{-1}p^{-1/(p-1)}$ if $x$ and all $\o_i$  lie in a finite extension $K$
of $\mathbb Q_p$ whose ring of integers has maximal ideal $(\pi)$ with ramification index over $\mathbb Q_p$ less than $p-1.$
\end{enumerate}
\end{theorem}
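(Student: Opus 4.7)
The plan is to verify all six parts by direct manipulation of the defining integral (\ref{mul-def-E-zeta}). Parts (1)--(4) are formal, resting on the fermionic difference identity (\ref{-q-e-diff}), its odd-$m$ distribution analogue, and the multiplicativity of $\langle\cdot\rangle$. Parts (5)--(6) additionally require the elementary comparison between $\langle y\rangle$ and $y$ valid on the disc $|y|_p=|x|_p$ when $|x|_p>\|\bar\omega\|_p$.

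For (1), apply (\ref{-q-e-diff}) to the inner integral in $t_N$ with $f(t_N)=\langle x+\omega_1t_1+\cdots+\omega_Nt_N\rangle^{1-s}$ (other $t_i$ held fixed): the translation $x\mapsto x+\omega_N$ corresponds to $t_N\mapsto t_N+1$, so the sum of the two integrals collapses to $2\langle x+\omega_1t_1+\cdots+\omega_{N-1}t_{N-1}\rangle^{1-s}$. Part (2) follows at once from $\langle cy\rangle=\langle c\rangle\langle y\rangle$. For (3), use $\langle -y\rangle=\langle y\rangle$ (valid since $p$ is odd, so $-1\in\mu$ forces $\widehat{-y}=-\hat y$) to rewrite the left side as $\int\langle x+\bar\omega\cdot(-\bar 1-\bar t)\rangle^{1-s}\,d\mu_{-1}(\bar t)$, then substitute $t_i\mapsto -1-t_i$ in each coordinate; this preserves $d\mu_{-1}$ by the elementary identity $\sum_{a=0}^{p^\ell-1}f(p^\ell-1-a)(-1)^a=\sum_a f(a)(-1)^a$ (odd $p$) together with $p^\ell-1\to-1$ in $\mathbb Z_p$. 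For (4), apply iteratively in each variable the fermionic distribution identity
\[
\int_{\mathbb Z_p}f(t)\,d\mu_{-1}(t)=\sum_{j=0}^{m-1}(-1)^j\int_{\mathbb Z_p}f(j+mt)\,d\mu_{-1}(t)\qquad(m\text{ odd}),
\]
together with $\langle my\rangle=\langle m\rangle\langle y\rangle$. The specialization $m=p^k$ removes the prefactor $\langle m\rangle^{1-s}$ because $v_p(p^k)=k$ and $\widehat{p^k}=1$ yield $\langle p^k\rangle=1$.

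For (5) and (6), note that when $|x|_p>\|\bar\omega\|_p$, the ultrametric inequality $|\bar\omega\cdot\bar t|_p\le\|\bar\omega\|_p<|x|_p$ forces $v_p(x+\bar\omega\cdot\bar t)=v_p(x)$ and $\widehat{x+\bar\omega\cdot\bar t}=\hat x$, hence the key factorization
\[
\langle x+\bar\omega\cdot\bar t\rangle=\frac{\langle x\rangle}{x}\,(x+\bar\omega\cdot\bar t)\qquad\text{for every }\bar t\in\mathbb Z_p^N.
\]
Setting $s=1-k$ and combining this with Corollary~\ref{co-lem-1} gives
\[
\zeta_{p,E,N}(1-k,x;\bar\omega)=\Bigl(\tfrac{\langle x\rangle}{x}\Bigr)^k\int_{\mathbb Z_p^N}(x+\bar\omega\cdot\bar t)^k\,d\mu_{-1}(\bar t)=\Bigl(\tfrac{\langle x\rangle}{x}\Bigr)^k E_{N,k}(x;\bar\omega),
\]
and Lemma~\ref{val-ne} supplies $E_{N,k}(x;\bar\omega)=2^N\zeta_{E,N}(-k,x;\bar\omega)$, proving (5). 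For (6), the same factorization shows that, locally in $x$, $\partial_x\langle x+\bar\omega\cdot\bar t\rangle=\langle x\rangle/x$ independently of $\bar t$, so $\partial_x\langle x+\bar\omega\cdot\bar t\rangle^{1-s}=-(s-1)(\langle x\rangle/x)\langle x+\bar\omega\cdot\bar t\rangle^{-s}$; the $m=1$ case follows by differentiation under the integral, and $m$-fold iteration (using that $\langle x\rangle/x$ is locally constant in $x$) produces the rising Pochhammer $(s-1)_m=(s-1)s(s+1)\cdots(s+m-2)$ with sign $(-1)^m$.

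The main obstacle is justifying differentiation under the fermionic integral in (6) on the stated domain. This is handled by Proposition~\ref{analytic}, which supplies analyticity in $s$ on the ramification-controlled disc and local analyticity in $x$ on $\mathbb C_p\setminus\Lambda$; these suffice to differentiate termwise against $d\mu_{-1}$. As a confirmatory alternative, one can expand $\langle x+\bar\omega\cdot\bar t\rangle^{1-s}=(\langle x\rangle/x)^{1-s}x^{1-s}(1+\bar\omega\cdot\bar t/x)^{1-s}$ via the binomial series, which converges uniformly for $\bar t\in\mathbb Z_p^N$ whenever $|\bar\omega\cdot\bar t/x|_p<1$, and differentiate termwise; the resulting series matches the claimed identity and simultaneously reconfirms (5).
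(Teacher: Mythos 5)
Your proposal is correct and follows essentially the same route as the paper: parts (1)--(4) by the fermionic difference/negation/distribution identities applied coordinatewise to the iterated integral together with multiplicativity of $\langle\cdot\rangle$, and parts (5)--(6) via the factorization $\langle x+\bar\omega\cdot\bar t\,\rangle=\frac{\langle x\rangle}{x}(x+\bar\omega\cdot\bar t\,)$ combined with Corollary~\ref{co-lem-1}, Lemma~\ref{val-ne}, and induction on $m$. The only cosmetic difference is that in (3) you perform the substitution $t_i\mapsto-1-t_i$ in one step where the paper composes a translation with a negation, which is the same computation.
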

\begin{proof}
(1) Following Tangedal and Young (see \cite[Theorem 3.2]{TY}), we define a function $f(t_N)$ on $\mathbb Z_p$ by the $(N-1)$-fold
iterated fermionic $p$-adic integral as follows
\begin{equation}\label{fN=0}
f(t_N)=\int_{\mathbb Z_p}\cdots\int_{\mathbb Z_p}
\frac1{\langle x+\omega_1 t_1+\cdots+\omega_N t_N\rangle^{s-1}} d\mu_{-1}(t_1)\cdots d\mu_{-1}(t_{N-1}).
\end{equation}
From (\ref{-q-e-diff}), we have
\begin{equation}\label{fN diff}
\int_{\mathbb Z_p}f(t_N+1)d\mu_{-1}(t_{N})+\int_{\mathbb Z_p}f(t_N)d\mu_{-1}(t_{N})=2f(0).
\end{equation}
If we put $t_N=0$ in (\ref{fN=0}), use (\ref{mul-def-E-zeta}) and (\ref{fN diff}), we get
$$\begin{aligned}
\zeta_{p,E,N}&(s,x+\omega_N;\bar\omega)+\zeta_{p,E,N}(s,x;\bar\omega) \\
&=\int_{\mathbb Z_p}f(t_N+1)d\mu_{-1}(t_{N})+\int_{\mathbb Z_p}f(t_N)d\mu_{-1}(t_{N}) \\
&=2f(0)  \\
&=2\int_{\mathbb Z_p}\cdots\int_{\mathbb Z_p}
\frac1{\langle x+\omega_1 t_1+\cdots+\omega_{N-1} t_{N-1}\rangle^{s-1}} d\mu_{-1}(t_1)\cdots d\mu_{-1}(t_{N-1}) \\
&=2\zeta_{p,E,N-1}(s,x;\omega_1,\ldots,\omega_{N-1}),
\end{aligned}$$
which prove (1).

(2) From the fact that $\la \cdot\ra$ is a multiplicative homomorphism, we have
$$\begin{aligned}
\zeta_{p,E,N}(s,cx;c\bar\omega)
&=\int_{\mathbb Z_p^N}\frac1{\langle c(x+\bar\omega\cdot\bar t\rangle^{s-1}} d\mu_{-1}(\bar t\,) \\
&=\frac1{\la c\ra^{s-1}}\zeta_{p,E,N}(s,x;\bar\omega).
\end{aligned}$$
Then the assertion is clear.

(3) We define
\begin{equation}\label{def-ft}
f(x,\bar t\,)=\frac1{\langle x+\bar\omega\cdot\bar t\rangle^{s-1}}.
\end{equation}
Since $$\int_{\mathbb Z_p}f(t+1)d\mu_{-1}(t)=\int_{\mathbb Z_p}f(-t)d\mu_{-1}(t)$$ (see \cite[Theorem 2.2(2)]{KS}),
we have
$$\begin{aligned}
\zeta_{p,E,N}&(s,|\bar\o|-x;\bar\omega)  \\
&=\int_{\mathbb Z_p^N}f(\omega_1+\cdots+\o_N-x,\bar t\,)d\mu_{-1}(\bar t\,) \\
&=\int_{\mathbb Z_p^N}\frac1{\langle \omega_1+\cdots+\o_N-x+\omega_1 t_1+\cdots+\omega_{N} t_{N}\rangle^{s-1}} d\mu_{-1}(\bar t\,) \\
&=\int_{\mathbb Z_p^N}\frac1{\langle \omega_1+\cdots+\o_{N-1}-x+\omega_1 t_1+\cdots+\omega_{N} (t_{N}+1)\rangle^{s-1}} d\mu_{-1}(\bar t\,) \\
&=\int_{\mathbb Z_p^N}f(\omega_1+\cdots+\o_{N-1}-x,t_1,\ldots,t_{N-1},t_N+1)d\mu_{-1}(\bar t\,) \\
&=\int_{\mathbb Z_p^N}f(\omega_1+\cdots+\o_{N-1}-x,t_1,\ldots,t_{N-1},-t_N)d\mu_{-1}(\bar t\,) \\
&=\cdots \\
&=\int_{\mathbb Z_p^N}f(-x,-t_1,\ldots,-t_N)d\mu_{-1}(\bar t\,) \\
&=\int_{\mathbb Z_p^N}f(x,\bar t\,)d\mu_{-1}(\bar t\,)=\zeta_{p,E,N}(s,x;\bar\omega).
\end{aligned}$$
The validity of the  penultimate equality depends on the fact that $\la-z\ra^s=\la z\ra^s;$ when $p\neq2$ this is valid
since $\la-z\ra=\la z\ra.$ For $p=2$ we have $\la-z\ra=-\la z\ra$ so the equation is not valid for general $s,$ but there is a disc about
$s=0$ on which $\la-z\ra^s=\exp_p(s\log_p(-\la z\ra))=\exp_p(s\log\la z\ra)=\la z\ra^s,$ so on this
disc the equation is valid (see \cite[p.~1250]{TY}).

(4) We note  that \cite[Theorem 2.2(3)]{KS}
\begin{equation}\label{mul-ft-4}
\int_{\mathbb Z_p}f(t)d\mu_{-1}(t)=\sum_{j=0}^{m-1}(-1)^j\int_{\mathbb Z_p}f(j+mt)d\mu_{-1}(t),
\end{equation}
where $m$ is an odd integer.
We may use (\ref{def-ft}) and (\ref{mul-ft-4}) to compute
$$\begin{aligned}
\zeta_{p,E,N}(s,x;\bar\omega)&=\int_{\mathbb Z_p^N}f(x,\bar t\,)d\mu_{-1}(\bar t\,) \\
&=\int_{\mathbb Z_p^{N-1}}\left(\int_{\mathbb Z_p}f(x,t_1,\ldots,t_N)d\mu_{-1}(t_N)\right)\cdots d\mu_{-1}(t_1) \\
&=\sum_{j_N=0}^{m-1}(-1)^{j_N}\int_{\mathbb Z_p^{N}}f(x,t_1,\ldots,j_N+mt_N)d\mu_{-1}(t_N)\cdots d\mu_{-1}(t_1) \\
&=\cdots \\
&=\sum_{0\leq j_i<m}(-1)^{|\bar j|} \int_{\mathbb Z_p^{N}}f(x,\bar j+m\bar t\,)d\mu_{-1}(\bar t\,)  \\
&=\sum_{0\leq j_i<m}(-1)^{|\bar j|} \int_{\mathbb Z_p^{N}}f(x+\bar j\cdot\bar\o,m\bar t\,)d\mu_{-1}(\bar t\,)  \\
&=\la m\ra^{1-s}\sum_{0\leq j_i<m}(-1)^{|\bar j|}
\int_{\mathbb Z_p^{N}}f\left(\frac{x+\bar j\cdot\bar\o}{m},\bar t\,\right)d\mu_{-1}(\bar t\,) \\
&=\la m\ra^{1-s}\sum_{0\leq j_i<m}(-1)^{|\bar j|}
\zeta_{p,E,N}\left(s,\frac{x+\bar j\cdot\bar\o}{m};\bar \o\right),
\end{aligned}$$
proving (4).

(5) Note that (\cite[p.~1251]{TY}) if $|x|_p>\|\bar\o\|_p,$ then
\begin{equation}\label{di-1}
\la x+\bar\o\cdot\bar t\,\ra=\frac{\la x\ra}{x}(x+\bar\o\cdot\bar t\,)\quad\text{for all }\bar t\in\mathbb Z_p^N.
\end{equation}
For given $x$ and $\bar\o$ satisfying these hypotheses, we have
\begin{equation}\label{di-2}
\frac{\partial}{\partial x}\la x+\bar\o\cdot\bar t\,\ra^s=s\la x+\bar\o\cdot\bar t\,\ra^s (x+\bar\o\cdot\bar t\,)^{-1}
\quad\text{uniformly for }\bar t\in\mathbb Z_p^N.
\end{equation}
Now by (\ref{mul-def-E-zeta}), (\ref{di-1}), Corollary \ref{co-lem-1} and Lemma \ref{val-ne}, we have
$$\begin{aligned}
\zeta_{p,E,N}(1-k,x;\bar\omega)
&=\int_{\mathbb Z_p^N} \la x+\bar\o\cdot\bar t\,\ra^{k}d\mu_{-1}(\bar t\,) \\
&=\left(\frac{\la x\ra}{x}\right)^k\int_{\mathbb Z_p^N}  (x+\bar\o\cdot\bar t\,)^{k}d\mu_{-1}(\bar t\,) \\
&=\left(\frac{\la x\ra}{x}\right)^k\int_{\mathbb Z_p^N} E_{0,k}(x+\bar\o\cdot\bar t;-) d\mu_{-1}(\bar t\,) \\
&=\left(\frac{\la x\ra}{x}\right)^k E_{N,k}(x;\bar\o) \\
&=2^N\left(\frac{\la x\ra}{x}\right)^k \zeta_{E,N}(-k,x;\bar\omega),
\end{aligned}$$
which proves (5).

(6) From (\ref{di-1}) and (\ref{di-2}) we have
$$\begin{aligned}
\frac{\partial}{\partial x}\zeta_{p,E,N}(s,x;\bar\omega)
&=\int_{\mathbb Z_p^N} \frac{\partial}{\partial x}\la x+\bar\o\cdot\bar t\,\ra^{1-s}d\mu_{-1}(\bar t\,) \\
&=(1-s)\int_{\mathbb Z_p^N} \la x+\bar\o\cdot\bar t\,\ra^{1-s}\frac{1}{x+\bar\o\cdot\bar t}d\mu_{-1}(\bar t\,) \\
&=(1-s)\frac{\la x\ra}{x}\int_{\mathbb Z_p^N} \la x+\bar\o\cdot\bar t\,\ra^{-s}d\mu_{-1}(\bar t\,)   \\
&=(1-s)\frac{\la x\ra}{x}\zeta_{p,E,N}(s+1,x;\bar\omega),
\end{aligned}$$
which proves (6) for $m=1.$ The general statement of (6) follows by induction on $m.$
\end{proof}

Given $\o_1,\ldots,\o_N\in\mathbb C_p^\times$ and $x\in\mathbb C_p\setminus\Lambda,$
we define the multiple $p$-adic Diamond-Euler Log Gamma function $\Log(x;\bar\o)$ by
\begin{equation}\label{mul-e-log}
\Log(x;\bar\omega)=\frac{x}{\la x\ra}\frac{\partial}{\partial s}\left(\frac1{s-1}\zeta_{p,E,N}(s,x;\bar\omega)\right)\biggl|_{s=0}.
\end{equation}

When $N=1$ and $\o_1=1$ this is the $p$-adic Diamond-Euler Log Gamma function
$${\rm Log}\, \Gamma_{\! D,E,1}(x;1)={\rm Log}\, \Gamma_{\! D,E}(x),\quad x\in \mathbb C_p\setminus\mathbb Z_p$$
(see \cite[p.~4235, (2.1)]{KS13}).

By Proposition \ref{analytic} and (\ref{mul-e-log}), we have the following result on the analytic  of $\Log(x;\bar\omega)$ as a function of $x$.

\begin{theorem}\label{analytic-gamma}
For any $\o_1,\ldots,\o_N\in\mathbb C_p^\times$ the function $\Log(x;\bar\omega)$ is independent of the choice made to define the
$\la \cdot\ra$ function, and is locally analytic as a function of $x\in\mathbb C_p\setminus\Lambda$.
\end{theorem}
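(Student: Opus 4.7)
The plan is to derive the theorem directly from Proposition~\ref{analytic}, treating in turn the well-definedness of the derivative in (\ref{mul-e-log}), the independence from the choice defining $\langle\cdot\rangle$, and the local analyticity in~$x$. For well-definedness, Proposition~\ref{analytic} asserts that $s\mapsto\zeta_{p,E,N}(s,x;\bar\omega)$ is analytic on a disc $D$ of positive radius about $s=0$; since $s\mapsto 1/(s-1)$ is analytic on $D$ (with value $-1$ at $s=0$), the product $\frac{1}{s-1}\zeta_{p,E,N}(s,x;\bar\omega)$ is analytic in $s$ on $D$, its derivative at $s=0$ exists in $\mathbb{C}_p$, and multiplying by the $s$-independent scalar $x/\langle x\rangle\in\mathbb{C}_p^\times$ then gives a well-defined value for $\Log(x;\bar\omega)$.

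For independence from the choice defining $\langle\cdot\rangle$, Proposition~\ref{analytic} already tells us that $\zeta_{p,E,N}(s,x;\bar\omega)$ on $D$ is independent of that choice; by uniqueness of Taylor expansions in $s$, every Taylor coefficient at $s=0$ is independent, and hence so is the derivative at $s=0$ of $\frac{1}{s-1}\zeta_{p,E,N}(s,x;\bar\omega)$. From (\ref{p-brac}) one reads off $x/\langle x\rangle=p^{v_p(x)}\hat{x}$: any rescaling of the chosen $p^{v_p(x)}\in p^{\mathbb{Q}}$ by a prime-to-$p$ root of unity $\zeta$ rescales $\hat{x}$ by $\zeta^{-1}$ (because the Teichm\"uller map commutes with multiplication by prime-to-$p$ roots of unity), and the two changes cancel in the product. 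Therefore $\Log(x;\bar\omega)$ is independent of the choice.

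For local analyticity as a function of $x\in\mathbb{C}_p\setminus\Lambda$, I would invoke Proposition~\ref{analytic} once more, together with the convergent Laurent-type expansion in $s$ supplied by Theorem~\ref{t-expan-p=ga}, whose coefficients are locally analytic functions of $x$; differentiating that expansion term by term at $s=0$ produces a locally analytic function of $x$. Since $v_p(x)$ and $\hat{x}$ are locally constant on a small neighbourhood of any $x_0\in\mathbb{C}_p^\times$, the prefactor $x/\langle x\rangle$ is even locally constant there, so the product $\Log(x;\bar\omega)$ remains locally analytic on $\mathbb{C}_p\setminus\Lambda$. The main obstacle, modest though it is, is precisely this passage from joint behaviour in $(s,x)$ to local analyticity of the $s$-derivative in $x$; the cleanest justification is the explicit expansion of Theorem~\ref{t-expan-p=ga} combined with uniform convergence of the differentiated series on a polydisc in $(s,x)$ obtained from the estimates proved along with that theorem.
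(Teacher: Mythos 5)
Your proposal is correct and follows the same route as the paper, which simply derives the theorem from Proposition \ref{analytic} together with the definition (\ref{mul-e-log}); your write-up supplies the details the paper leaves implicit (analyticity of the product with $1/(s-1)$ on the disc, choice-independence and local constancy of the prefactor $x/\la x\ra=p^{v_p(x)}\hat{x}$). One caveat: Theorem \ref{t-expan-p=ga} is stated only under the hypothesis $|x|_p>\|\bar\omega\|_p$, so it cannot justify local analyticity at points with $|x|_p\le\|\bar\omega\|_p$; for those $x$ you must instead rely on the local analyticity in $x$ (for $s$ in the disc about $0$) already asserted in Proposition \ref{analytic}, which is what the paper implicitly does.
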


In what follows, we shall give some properties of multiple $p$-adic Diamond-Euler Log Gamma function $\Log(x;\bar\omega)$.

Using the Volkenborn integral, Diamond \cite{Di,Di2} gave a definition for the $p$-adic Log Gamma functions.
Following Diamond, using the multiple fermionic $p$-adic integral on $\mathbb Z_p,$
we obtain an integral representation of $\Log(x;\bar\omega),$ which shows that it is indeed a generalization  of  the $p$-adic Diamond-Euler Log Gamma function defined in \cite[(2.1)]{KS13}, as follows:

\begin{proposition} \label{mul-e-log-int}
For $\o_1,\ldots,\o_N\in\mathbb C_p^\times$ and $x\in\mathbb C_p\setminus\Lambda$ we have
$$\Log(x;\bar\omega)=\int_{\mathbb Z_p^N} ((x+\bar\o\cdot\bar t\,)\log_p(x+\bar\o\cdot\bar t\,)-(x+\bar\o\cdot\bar t\,))d\mu_{-1}(\bar t\,),$$
where $\log_p$ is the Iwasawa $p$-adic logarthm.
\end{proposition}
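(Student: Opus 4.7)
My plan is to derive the integral representation by differentiating the $s$-variable inside the fermionic $p$-adic integral. Starting from the definition (\ref{mul-e-log}) of $\Log(x;\bar\omega)$ and the integral representation (\ref{mul-def-E-zeta}) of $\zeta_{p,E,N}(s,x;\bar\omega)$, I would write
\[
\frac{\zeta_{p,E,N}(s,x;\bar\omega)}{s-1}=\frac{1}{s-1}\int_{\mathbb Z_p^N}\langle x+\bar\omega\cdot\bar t\rangle^{1-s}\,d\mu_{-1}(\bar t).
\]
Proposition \ref{analytic} gives the analyticity of the integrand in $s$ on a disc about $s=0$, uniformly in $\bar t\in\mathbb Z_p^N$, which justifies exchanging differentiation with the fermionic integral. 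Combining $\frac{d}{ds}\langle y\rangle^{1-s}=-\langle y\rangle^{1-s}\log_p\langle y\rangle$ with the quotient rule and evaluating at $s=0$ yields
\[
\frac{\partial}{\partial s}\!\left(\frac{\zeta_{p,E,N}(s,x;\bar\omega)}{s-1}\right)\!\bigg|_{s=0}=\int_{\mathbb Z_p^N}\bigl(\langle y\rangle\log_p\langle y\rangle-\langle y\rangle\bigr)\,d\mu_{-1}(\bar t),
\]
where $y=x+\bar\omega\cdot\bar t$.

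Next I would treat the case $|x|_p>\|\bar\omega\|_p$ directly. In this range, (\ref{di-1}) gives $\langle y\rangle=(\langle x\rangle/x)\,y$ for every $\bar t\in\mathbb Z_p^N$, and the definition of the Iwasawa logarithm (\ref{p-log}) gives $\log_p y=\log_p\langle y\rangle$. Multiplying the previous display by $x/\langle x\rangle$ and making these substitutions collapses the integrand to $y\log_p y-y$ pointwise in $\bar t$, which establishes the proposition under this hypothesis on $x$.

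To extend to arbitrary $x\in\mathbb C_p\setminus\Lambda$, I would invoke the distribution formula of Theorem \ref{e-zeta-thm1}(4) with $m=p^k$,
\[
\zeta_{p,E,N}(s,x;\bar\omega)=\sum_{0\le j_i<p^k}(-1)^{|\bar j|}\,\zeta_{p,E,N}\!\left(s,\tfrac{x+\bar j\cdot\bar\omega}{p^k};\bar\omega\right),
\]
to express $\Log(x;\bar\omega)$ as a finite combination of $\Log(y_j;\bar\omega)$, where $y_j=(x+\bar j\cdot\bar\omega)/p^k$. Because $\Lambda$ is closed (as the continuous image of the compact set $\mathbb Z_p^N$ under $\bar t\mapsto\bar\omega\cdot\bar t$) and $x\notin\Lambda$, we have $d(x,\Lambda)>0$, so for $k$ sufficiently large every $y_j$ satisfies $|y_j|_p>\|\bar\omega\|_p$. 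The preceding paragraph then applies to each $\Log(y_j;\bar\omega)$. Simultaneously I would apply the fermionic distribution identity (\ref{mul-ft-4}) to the right-hand side of the proposition, using $\log_p p^k=0$ and $\langle p^k z\rangle=\langle z\rangle$, to produce a matching decomposition for $\int(y\log_p y-y)\,d\mu_{-1}(\bar t)$.

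The main obstacle will be the bookkeeping of the scaling coefficients in the two distribution expansions: when $\Log(x;\bar\omega)$ is expanded via Theorem \ref{e-zeta-thm1}(4), each term carries a factor $\langle y_j\rangle/y_j=p^k\langle x+\bar j\cdot\bar\omega\rangle/(x+\bar j\cdot\bar\omega)$ coming from the $x/\langle x\rangle$ prefactor in the definition of $\Log$, whereas the expansion of the right-hand integral produces only a uniform factor of $p^k$. Reconciling the two decompositions term by term will require careful exploitation of the multiplicativity of $\langle\cdot\rangle$ together with $\langle p^k\rangle=1$, ensuring that these apparent discrepancies in the $\bar j$-indexed factors cancel once combined with the factor $x/\langle x\rangle$ outside the sum.
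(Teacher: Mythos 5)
Your first two paragraphs reproduce the paper's own proof: differentiate $\langle x+\bar\omega\cdot\bar t\,\rangle^{1-s}$ under the fermionic integral, apply the quotient rule at $s=0$ to obtain $\int_{\mathbb Z_p^N}(\langle y\rangle\log_p\langle y\rangle-\langle y\rangle)\,d\mu_{-1}(\bar t\,)$ with $y=x+\bar\omega\cdot\bar t$, and then use (\ref{di-1}) together with $\log_p\langle y\rangle=\log_p y$ to replace $\tfrac{x}{\langle x\rangle}\langle y\rangle$ by $y$. That part is correct and is exactly what the paper does; since (\ref{di-1}) is only available for $|x|_p>\|\bar\omega\|_p$, it establishes the proposition precisely on the range you isolate in your second paragraph.

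The gap is in your extension step: the cancellation you are counting on does not occur. Expanding (\ref{mul-e-log}) through the distribution relation with $m=p^k$ gives
$$\Log(x;\bar\omega)=\frac{x}{\langle x\rangle}\sum_{0\leq j_i<p^k}(-1)^{|\bar j|}\frac{\langle y_j\rangle}{y_j}\,\Log(y_j;\bar\omega),\qquad y_j=\frac{x+\bar j\cdot\bar\omega}{p^k},$$
whereas expanding the right-hand integral via (\ref{mul-ft-4}) with $\log_p p=0$ and applying the already-proved case to each $y_j$ gives $p^k\sum_{0\leq j_i<p^k}(-1)^{|\bar j|}\Log(y_j;\bar\omega)$. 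Since $\tfrac{\langle y_j\rangle}{y_j}=p^k\tfrac{\langle x+\bar j\cdot\bar\omega\rangle}{x+\bar j\cdot\bar\omega}$, a term-by-term match would require
$$\frac{x}{\langle x\rangle}\cdot\frac{\langle x+\bar j\cdot\bar\omega\rangle}{x+\bar j\cdot\bar\omega}=p^{\,v_p(x)-v_p(x+\bar j\cdot\bar\omega)}\,\frac{\hat x}{\widehat{x+\bar j\cdot\bar\omega}}=1$$
for every $\bar j$, which is precisely (\ref{di-1}) applied at the points $x+\bar j\cdot\bar\omega$ and fails whenever $|x|_p\le\|\bar\omega\|_p$: already for $N=1$, $\omega_1=1$ and $x$ a unit outside $\mathbb Z_p$, the Teichm\"uller representatives $\widehat{x+j}$ genuinely vary with $j$, and a sum of the $\Log(y_j;\bar\omega)$ weighted by distinct roots of unity has no reason to equal the unweighted sum. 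So the patching argument does not close the general case. To be fair, the paper's one-line proof has the same limitation --- it invokes (\ref{di-1}) without its hypothesis --- so what is actually proved (by you and by the paper) carries the extra condition $|x|_p>\|\bar\omega\|_p$; for general $x\notin\Lambda$ the two sides of the proposition differ by $\int_{\mathbb Z_p^N} y\bigl(p^{v_p(x)-v_p(y)}\hat x/\hat y-1\bigr)(\log_p y-1)\,d\mu_{-1}(\bar t\,)$, which there is no reason to expect to vanish.
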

\begin{proof}
Note that
$\frac{\partial}{\partial s}\la x+\bar\o\cdot\bar t\,\ra^{1-s}=-\la x+\bar\o\cdot\bar t\,\ra^{1-s}\log_p\la x+\bar\o\cdot\bar t\,\ra.$
From (\ref{di-1}) and (\ref{mul-e-log}) the assertion is clear, since $\log_p x=\log_p\la x\ra$ on $\mathbb C_p^\times$ (see \cite[p.~1244, (2.18)]{TY}).
\end{proof}

\begin{remark}
Imai \cite[Proposition 1]{Im} also defined a $p$-adic log multiple $\Gamma$ function and formulated a relationship with a special value of a $p$-adic analogue of
the multiple $\zeta$-function, not with the derivatives.
\end{remark}

We put
\begin{equation}\label{deri}
D=\frac{\partial}{\partial x}.
\end{equation}
Furthermore, if $D^k$ denotes the $n$th derivative, we define
\begin{equation}\label{der-h}
\psi_{p,E,N}^{(k)}(x;\bar\omega)=D^k\Log(x;\bar\omega)
\end{equation}
for $x\in\mathbb C_p\setminus\Lambda.$ We also denote
$\psi_{p,E,N}^{(1)}(x;\bar\omega)$ by $\psi_{p,E,N}(x;\bar\omega).$

\begin{theorem}\label{e-gamma-thm1}
The function $\Log(x;\bar\omega)$ has the following properties:
\begin{enumerate}
\item
For all $x\in\mathbb C_p\setminus\Lambda$ the function $\Log(x;\bar\omega)$ satisfies the difference equation
$$\begin{aligned}
\Log(x+\omega_N;\bar\omega)+&\Log(x;\bar\omega) \\
&=2{\rm Log}\, \Gamma_{\! D,E,N-1}(x;\omega_1,\ldots,\omega_{N-1}),
\end{aligned}$$
with the convention ${\rm Log}\, \Gamma_{\! D,E,0}(x;-)=x(\log_p x-1).$
\item
For all $c\in\mathbb C_p^\times$ and all $\in\mathbb C_p\setminus\Lambda$ we have
$$\Log(cx;c\bar\omega)=c\left(\Log(x;\bar\omega)+\frac x{\la x\ra}\zeta_{p,E,N}(0,x;\bar\o)\log_p c\right).$$
\item For all $x\in\mathbb C_p\setminus\Lambda$ and $|x|_p>\|\bar\o\|_p$ we have the reflection function equation
$$\Log(|\bar\o|-x;\bar\omega)+\Log(x;\bar\omega)=0,$$
where $|\bar\o|=\omega_1+\cdots+\o_N.$
\item For all $x\in\mathbb C_p\setminus\Lambda$ and all positive odd integers $m$ we have the multiplication formula (distribution relation)
$$\begin{aligned}
\Log(x;\bar\omega)&= m \sum_{0\leq j_i<m}(-1)^{|\bar j|}\Log\left(\frac{x+\bar j\cdot\bar\o}{m};\bar\omega\right) \\
&\quad+E_{N,1}(x;\bar\o)\log_p m,
\end{aligned}$$
where the sum is over all vectors $\bar j=(j_1,\ldots,j_N)$ with $0\leq j_i<m.$
In particular for any $k\in\mathbb N$ we have
$$\Log(x;\bar\omega)= p^k \sum_{0\leq j_i< p^k}(-1)^{|\bar j|}\Log\left(\frac{x+\bar j\cdot\bar\o}{p^k};\bar\omega\right).$$

\item For all $x\in\mathbb C_p\setminus\Lambda$ and for $|x|_p>\|\bar\o\|_p$ we have
$$\psi_{p,E,N}^{(k+1)}(x;\bar\omega)
=(-1)^{k+1}\int_{\mathbb Z_p^N} \frac1{(x+\bar\o\cdot\bar t\,)^k}d\mu_{-1}(\bar t\,)$$
and we may write this as
$$\psi_{p,E,N}^{(k+1)}(x;\bar\omega)=(-1)^{k+1}\left(\frac {\la x\ra}x\right)^{k}\zeta_{p,E,N}(k+1,x;\bar\omega),$$
where $k\in\mathbb N.$ In particular
$\psi_{p,E,N}(x;\bar\omega)=\int_{\mathbb Z_p^N} \log_p(x+\bar\o\cdot\bar t\,) d\mu_{-1}(\bar t\,).$
\end{enumerate}
\end{theorem}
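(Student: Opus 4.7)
The plan is to derive all five parts in a uniform way by starting from the integral representation
$\Log(x;\bar\omega)=\int_{\mathbb Z_p^N}\bigl((x+\bar\o\cdot\bar t\,)\log_p(x+\bar\o\cdot\bar t\,)-(x+\bar\o\cdot\bar t\,)\bigr)d\mu_{-1}(\bar t\,)$
provided by Proposition \ref{mul-e-log-int}, and then transplanting, step by step, the corresponding arguments used for $\zeta_{p,E,N}$ in Theorem \ref{e-zeta-thm1}. The main new ingredient in each part is the Iwasawa logarithm $\log_p$, which is additive on $\mathbb C_p^\times$ and satisfies $\log_p(-y)=\log_p y$; all other manipulations are direct consequences of the three basic properties of $d\mu_{-1}$ recorded in \cite[Theorem 2.2]{KS}.

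For Part (1), I would imitate the proof of Theorem \ref{e-zeta-thm1}(1): define $g(t_N)$ as the $(N{-}1)$-fold iterated integral of $(x+\bar\o\cdot\bar t\,)\log_p(x+\bar\o\cdot\bar t\,)-(x+\bar\o\cdot\bar t\,)$ in the variables $t_1,\ldots,t_{N-1}$, and apply the fermionic difference identity (\ref{-q-e-diff}) to $g$. The convention $\mathrm{Log}\,\Gamma_{\!D,E,0}(x;-)=x(\log_p x-1)$ is exactly the $N=0$ case of the integrand, so the right-hand side is $2g(0)$, as required. For Part (2), I would substitute $cx,c\bar\o$ into the integral representation, pull $c$ outside, and split using $\log_p(c(x+\bar\o\cdot\bar t\,))=\log_p c+\log_p(x+\bar\o\cdot\bar t\,)$. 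The residual integral $\int_{\mathbb Z_p^N}(x+\bar\o\cdot\bar t\,)d\mu_{-1}(\bar t\,)$ coincides, under the assumption $|x|_p>\|\bar\o\|_p$ via (\ref{di-1}), with $\frac{x}{\la x\ra}\zeta_{p,E,N}(0,x;\bar\o)$, giving the claimed form. For Part (3), I would apply the change of variable $\int_{\mathbb Z_p}f(t+1)\,d\mu_{-1}(t)=\int_{\mathbb Z_p}f(-t)\,d\mu_{-1}(t)$ iteratively in each $t_i$, exactly as in the proof of Theorem \ref{e-zeta-thm1}(3); invariance of the integrand under $(x,\bar t\,)\mapsto(-x,-\bar t\,)$ then follows from $\log_p(-y)=\log_p y$ (for $p$ odd; the $p=2$ adjustment goes through as noted in \cite[p.~1250]{TY}).

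For Part (4), apply the $m$-fold fermionic distribution relation (\ref{mul-ft-4}) coordinate by coordinate to pass from $d\mu_{-1}(t_i)$ to $\sum_{j_i=0}^{m-1}(-1)^{j_i}d\mu_{-1}(j_i+mt_i)$. Substituting this into the integrand and using $\log_p\bigl(m\cdot\tfrac{x+\bar j\cdot\bar\o+m\bar\o\cdot\bar t}{m}\bigr)=\log_p m+\log_p\bigl(\tfrac{x+\bar j\cdot\bar\o+m\bar\o\cdot\bar t}{m}\bigr)$ produces two terms: one is $m\sum_{\bar j}(-1)^{|\bar j|}\Log\bigl(\tfrac{x+\bar j\cdot\bar\o}{m};\bar\o\bigr)$, and the other is $\log_p m$ times $\sum_{\bar j}(-1)^{|\bar j|}\int(x+\bar j\cdot\bar\o+m\bar\o\cdot\bar t)d\mu_{-1}(\bar t\,)$. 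The latter, by Corollary \ref{co-lem-1} and the distribution property for the Euler polynomials $E_{N,1}$ (obtained by iterating the same substitution and using Lemma \ref{ele-eu-poly}), collapses to $E_{N,1}(x;\bar\o)$, giving the stated correction term. For Part (5), I would differentiate the integral representation under the integral sign: the first derivative telescopes to $\int\log_p(x+\bar\o\cdot\bar t\,)d\mu_{-1}(\bar t\,)=\psi_{p,E,N}(x;\bar\omega)$, and iterating $D\log_p y=1/y$ yields the integral of $\pm(x+\bar\o\cdot\bar t\,)^{-k}$; converting to $\la x+\bar\o\cdot\bar t\,\ra^{-k}$ via (\ref{di-1}) under $|x|_p>\|\bar\o\|_p$ rewrites it as $(\la x\ra/x)^k\zeta_{p,E,N}(k+1,x;\bar\omega)$.

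The main obstacle is Part (4): one must carefully track how the multiplicative property of $\log_p$ splits the integrand into a ``Log Gamma'' piece and an ``Euler polynomial times $\log_p m$'' piece, and then recognize the second piece as $E_{N,1}(x;\bar\o)$ through the distribution relation for higher-order Euler polynomials. Justifying the interchange of differentiation and integration in Part (5) is a standard uniform-convergence argument on the indicated disc in $s$ (and hence also for the $x$-derivatives), using the control provided by Proposition \ref{analytic} and Theorem \ref{analytic-gamma}.
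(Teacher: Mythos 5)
Your strategy is genuinely different from the paper's. For Parts (1)--(4) the paper never touches the integral representation: it takes the corresponding identity of Theorem \ref{e-zeta-thm1}, multiplies by $\frac{1}{s-1}$, differentiates at $s=0$ using Leibniz's rule, and tracks the prefactors $\frac{x}{\la x\ra}$ (this is where the correction terms $\frac{x}{\la x\ra}\zeta_{p,E,N}(0,x;\bar\o)\log_p c$ and $E_{N,1}(x;\bar\o)\log_p m$ arise, via the identity (\ref{note}) and the distribution relation (\ref{zeta-neg-val})); only Part (5) is proved from Proposition \ref{mul-e-log-int}. Your plan of redoing the measure-level manipulations (\ref{-q-e-diff}), the reflection substitutions, and (\ref{mul-ft-4}) directly on the integrand $(x+\bar\o\cdot\bar t\,)(\log_p(x+\bar\o\cdot\bar t\,)-1)$ and splitting off $\log_p c$ and $\log_p m$ by additivity of the Iwasawa logarithm is legitimate, and for Part (4) it is arguably cleaner than the paper's computation. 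Parts (1) and (5) as you describe them are fine.

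Two steps need repair, however. First, in Part (3) the integrand is \emph{odd}, not invariant, under $(x,\bar t\,)\mapsto(-x,-\bar t\,)$: since $\log_p(-y)=\log_p y$, the quantity $y\log_p y-y$ changes sign when $y\mapsto-y$. The invariance you assert would prove $\Log(|\bar\o|-x;\bar\omega)=\Log(x;\bar\omega)$, whereas the theorem asserts the sum is zero; you need the anti-invariance to produce the minus sign. The fix is one line, but as written the step yields the wrong identity. Second, your argument inherits the hypothesis $|x|_p>\|\bar\o\|_p$ wherever Proposition \ref{mul-e-log-int} and (\ref{di-1}) are invoked, and this bites concretely in Part (2): the residual integral is $\int_{\mathbb Z_p^N}(x+\bar\o\cdot\bar t\,)\,d\mu_{-1}(\bar t\,)=E_{N,1}(x;\bar\o)$ by Corollary \ref{co-lem-1}, and identifying this with $\frac{x}{\la x\ra}\zeta_{p,E,N}(0,x;\bar\o)$ requires $\la x+\bar\o\cdot\bar t\,\ra=\frac{\la x\ra}{x}(x+\bar\o\cdot\bar t\,)$ for all $\bar t$, which fails for general $x\in\mathbb C_p\setminus\Lambda$. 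Part (2) of the theorem carries no such restriction, and the paper's Leibniz-rule proof needs none because it never converts $\zeta_{p,E,N}(0,x;\bar\o)$ into an Euler polynomial. So you should either add the hypothesis $|x|_p>\|\bar\o\|_p$ to your versions of (1), (2), (4), or switch to the paper's $s$-derivative argument where full generality in $x$ is claimed.
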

\begin{proof}
(1) Suppose that $N=0.$ By (\ref{E-zeta0}) and (\ref{mul-e-log}), we get
$${\rm Log}\, \Gamma_{\! D,E,0}(x;-)=\frac{x}{\la x\ra}\frac{\partial}{\partial s}\left(\frac1{s-1}\la x\ra^{1-s}\right)\biggl|_{s=0}
=x\log_p x -x.$$
Therefore Part (1) is obtained from Theorem \ref{e-zeta-thm1}(1) by using (\ref{mul-e-log}).

(2) From Theorem \ref{e-zeta-thm1}(2) we have
\begin{equation}\label{zeta-g-1}
\begin{aligned}
\frac{cx}{\la cx\ra}\frac{\partial}{\partial s}&\left(\frac1{s-1}\zeta_{p,E,N}(s,cx;c\bar\omega)\right)\biggl|_{s=0} \\
&=\frac{cx}{\la cx\ra}\frac{\partial}{\partial s}\left(\la c\ra^{1-s}\frac1{s-1}\zeta_{p,E,N}(s,x;\bar\omega) \right)\biggl|_{s=0}.
\end{aligned}
\end{equation}
Therefore by (\ref{mul-e-log}) and (\ref{zeta-g-1}) we obtain
$$
\begin{aligned}
\Log(cx;c\bar\omega)
&=\frac{cx}{\la cx\ra}\frac{\partial}{\partial s}\left(\la c\ra^{1-s} \right)\frac1{s-1}\zeta_{p,E,N}(s,x;\bar\omega)\biggl|_{s=0} \\
&\quad+\frac{c}{\la c\ra}\la c\ra^{1-s}\frac{x}{\la x\ra}\frac{\partial}{\partial s}\left(\frac1{s-1}\zeta_{p,E,N}(s,x;\bar\omega) \right)\biggl|_{s=0} \\
&=c\frac{x}{\la x\ra} \zeta_{p,E,N}(0,x;\bar\omega)\log_p c + c\Log(x;\bar\omega),
\end{aligned}
$$
which proves (2). In this calculation we have used the fact that $\la \cdot\ra$ is a multiplicative homomorphism.

(3) We will write $|\bar\o|=\omega_1+\cdots+\o_N.$ From Theorem \ref{e-zeta-thm1}(3) we have
\begin{equation}\label{zeta-g-3}
\begin{aligned}
\frac{|\bar\o|-x}{\la |\bar\o|-x\ra}&\frac{\partial}{\partial s}\left(\frac1{s-1}\zeta_{p,E,N}(s,|\bar\o|-x;\bar\omega)\right)\biggl|_{s=0} \\
&=\frac{|\bar\o|-x}{\la |\bar\o|-x\ra}\frac{\partial}{\partial s}\left(\frac1{s-1}\zeta_{p,E,N}(s,x;\bar\omega) \right)\biggl|_{s=0}.
\end{aligned}
\end{equation}
Since $\la -z\ra=\la z\ra,$ we see easily that for $|x|_p>\|\bar\o\|_p$
$$\frac{|\bar\o|-x}{\la |\bar\o|-x\ra}\frac{\la x\ra}{x}=\frac{|\bar\o|-x}{\frac{\la x\ra}{x}(x-|\bar\o|)}\frac{\la x\ra}{x}=-1.$$
Hence we obtain, by (\ref{mul-e-log}) and (\ref{zeta-g-3}), that
$$\Log(|\bar\o|-x;\bar\omega)=-\Log(x;\bar\omega),$$
which proves (3).

(4) Since $\la \cdot\ra$ is a multiplicative homomorphism, we can obtain the following identity
\begin{equation}\label{note}
\frac{\frac{x+\bar j\cdot\bar\o}{m}}{\left\la \frac{x+\bar j\cdot\bar\o}{m}\right\ra}=\frac{\la m\ra}m\frac{x}{\la x\ra}.
\end{equation}
Let $m$ be an odd integer. Then by Lemma \ref{val-ne}, Theorem \ref{e-zeta-thm1}(4), Theorem \ref{e-zeta-thm1}(5) and (\ref{note}) we have
\begin{equation}\label{zeta-neg-val}
E_{N,k}(x;\bar\omega)= m^k \sum_{0\leq j_i<m}(-1)^{|\bar j|}E_{N,k}\left(\frac{x+\bar j\cdot\bar\o}{m};\bar\omega\right),
\end{equation}
where the sum is over all vectors $\bar j=(j_1,\ldots,j_N)$ with $0\leq j_i<m.$
From Theorem \ref{e-zeta-thm1}(4) it is easy to see that
\begin{equation}\label{zeta-g-4}
\begin{aligned}
\frac{\frac{x+\bar j\cdot\bar\o}{m}}{\left\la \frac{x+\bar j\cdot\bar\o}{m}\right\ra}
&\frac{\partial}{\partial s}\left(\frac1{s-1}\zeta_{p,E,N}(s,x;\bar\omega)\right)\biggl|_{s=0} \\
&=\frac{\frac{x+\bar j\cdot\bar\o}{m}}{\left\la \frac{x+\bar j\cdot\bar\o}{m}\right\ra}
\frac{\partial}{\partial s}\left(\frac{\la m\ra^{1-s}}{s-1}
\sum_{0\leq j_i<m}^{m-1}(-1)^{|\bar j|}\zeta_{p,E,N}\left(s,\frac{x+\bar j\cdot\bar\o}{m};\bar\omega\right)\right)\biggl|_{s=0}.
\end{aligned}
\end{equation}
In view of (\ref{mul-e-log}) and (\ref{note}), the left side of (\ref{zeta-g-4}) becomes
\begin{equation}\label{left-log}
\frac{\la m\ra}{m}\frac{x}{\la x\ra}\frac{\partial}{\partial s}\left(\frac1{s-1}\zeta_{p,E,N}(s,x;\bar\omega)\right)\biggl|_{s=0}
=\frac{\la m\ra}m\Log(x;\bar\omega).
\end{equation}
Since
$\zeta_{p,E,N}(0,x;\bar\o)=\frac{\la x\ra}{x}E_{N,1}(x;\bar\o),$
by (\ref{mul-e-log}), (\ref{note}), (\ref{zeta-neg-val}) with $k=1,$ (\ref{zeta-g-4}) and (\ref{left-log}),  we get
$$
\begin{aligned}
\Log(x;\bar\omega)
&=\frac{\frac{x+\bar j\cdot\bar\o}{m}}{\left\la \frac{x+\bar j\cdot\bar\o}{m}\right\ra} m\log_p m
\sum_{0\leq j_i<m}(-1)^{|\bar j|}\zeta_{p,E,N}\left(0,\frac{x+\bar j\cdot\bar\o}{m};\bar\omega\right)
 \\
&\quad+\la m\ra \sum_{0\leq j_i<m}(-1)^{|\bar j|} \Log\left(\frac{x+\bar j\cdot\bar\o}{m};\bar\omega\right)  \\
&= m\log_p m \sum_{0\leq j_i<m}(-1)^{|\bar j|} E_{N,1}\left(\frac{x+\bar j\cdot\bar\o}{m};\bar\omega\right)
 \\
 &\quad+m \sum_{0\leq j_i<m}(-1)^{|\bar j|} \Log\left(\frac{x+\bar j\cdot\bar\o}{m};\bar\omega\right)  \\
 &= E_{N,1}(x;\bar\o) \log_p m \\
  &\quad+m \sum_{0\leq j_i<m}(-1)^{|\bar j|} \Log\left(\frac{x+\bar j\cdot\bar\o}{m};\bar\omega\right),
\end{aligned}
$$
which proves (4).

(5) Observe that for given $x$ and $\bar\o,$ we have
$$\frac{\partial}{\partial x}[(x+\bar\o\cdot\bar t\,)(\log_p(x+\bar\o\cdot\bar t\,)-1)]=\log_p(x+\bar\o\cdot\bar t\,).$$
Thus, by Proposition \ref{mul-e-log-int}, we have
$\psi_{p,E,N}(x;\bar\omega)=\int_{\mathbb Z_p^N} \log_p(x+\bar\o\cdot\bar t\,) d\mu_{-1}(\bar t\,)$
and the first equality of (5) follows by induction. Moreover for $|x|_p>\|\bar\o\|_p$ we have
\begin{equation}\label{int-<>}
\la x+\bar\o\cdot\bar t\,\ra=\frac{\la x\ra}{x} (x+\bar\o\cdot\bar t\,)\quad\textrm{for~~all}~~\bar t\in\mathbb Z_p^N
\end{equation}
and from the definition of $\zeta_{p,E,N}(s,x;\bar\omega)$ (\ref{mul-def-E-zeta}) and the first part of (5), we have
$$\begin{aligned}
\psi_{p,E,N}^{(k+1)}(x;\bar\omega)&=(-1)^{k+1}\int_{\mathbb Z_p^N} \frac1{(x+\bar\o\cdot\bar t\,)^k}d\mu_{-1}(\bar t\,) \\
&=(-1)^{k+1}\left(\frac {\la x\ra}x\right)^{k}\zeta_{p,E,N}(k+1,x;\bar\omega),
\end{aligned}$$
which is the second equality of (5).
\end{proof}

Now we give computationally efficient formulas for our functions in the case that the argument $x$ has $p$-adic absolute value larger than the norm
of $\bar\o.$

\begin{theorem}\label{t-expan-p=ga}
Suppose $\o_1,\ldots,\o_N,x\in\mathbb C_p^\times$ and $|x|_p>\|\bar\o\|_p.$ Then there is an identity of analytic functions
$$\zeta_{p,E,N}(s,x;\bar\o)=\la x\ra^{1-s}\sum_{j=0}^\infty\binom{1-s}{j}E_{N,j}(0;\bar\o)x^{-j}$$
on a disc of positive radius about $s=0.$
If in addition $\o_1,\ldots,\o_N,x$ are so chosen to lie in a finite extension $K$ of $\mathbb Q_p$ whose ramification index over $\mathbb Q_p$
is less than $p-1,$ then this formula is valid for $s\in\mathbb C_p$ such that $|s|_p<|\pi|_p^{-1}p^{-1/(p-1)},$ where $(\pi)$ is the maximal ideal
of the ring of integers $O_K$ of $K.$
\end{theorem}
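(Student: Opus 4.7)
The plan is to reduce the integrand defining $\zeta_{p,E,N}(s,x;\bar\o)$ to a computation involving $x+\bar\o\cdot\bar t$ directly (rather than its bracket), then to expand via the $p$-adic binomial series and integrate term by term using Corollary~\ref{co-lem-1}.

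First I would invoke the key identity (\ref{di-1}), which under the hypothesis $|x|_p>\|\bar\o\|_p$ reads
$$\la x+\bar\o\cdot\bar t\,\ra=\frac{\la x\ra}{x}(x+\bar\o\cdot\bar t\,)\qquad\text{for all }\bar t\in\mathbb Z_p^N.$$
Raising both sides to the power $1-s$ and pulling out the factor $x^{1-s}$ gives
$$\la x+\bar\o\cdot\bar t\,\ra^{1-s}=\la x\ra^{1-s}\left(1+\frac{\bar\o\cdot\bar t}{x}\right)^{1-s}.$$
Since $|\bar\o\cdot\bar t/x|_p\leq\|\bar\o\|_p/|x|_p<1$ uniformly in $\bar t\in\mathbb Z_p^N$, the $p$-adic binomial series
$$\left(1+\frac{\bar\o\cdot\bar t}{x}\right)^{1-s}=\sum_{j=0}^\infty\binom{1-s}{j}\left(\frac{\bar\o\cdot\bar t}{x}\right)^j$$
converges, and on the disc for $s$ asserted in the theorem it does so uniformly in $\bar t$.

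Next I would interchange the sum with the iterated fermionic integral and apply Corollary~\ref{co-lem-1} with $x$ replaced by $0$, yielding
$$\int_{\mathbb Z_p^N}(\bar\o\cdot\bar t\,)^j\,d\mu_{-1}(\bar t\,)=E_{N,j}(0;\bar\o).$$
Substituting this into the definition (\ref{mul-def-E-zeta}) produces precisely the asserted formula.

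The main obstacle is justifying that the identity holds as an equality of \emph{analytic functions} of $s$ on the stated disc, which requires controlled uniform bounds validating the interchange. For $s\in\mathbb Z_p$ the binomial coefficients satisfy $|\binom{1-s}{j}|_p\leq 1$, and combined with the elementary estimate $|E_{N,j}(0;\bar\o)x^{-j}|_p\leq(\|\bar\o\|_p/|x|_p)^j$, coming from the bound $|\bar\o\cdot\bar t|_p\leq\|\bar\o\|_p$ under the fermionic integral, the series converges geometrically and term-by-term integration is legitimate. For the refined disc $|s|_p<|\pi|_p^{-1}p^{-1/(p-1)}$ when $\o_1,\ldots,\o_N,x\in K$ with ramification index below $p-1$, I would instead write $(1+y)^{1-s}=\exp_p((1-s)\log_p(1+y))$ with $y=\bar\o\cdot\bar t/x$; since then $|y|_p\leq|\pi|_p<p^{-1/(p-1)}$ one has $|\log_p(1+y)|_p=|y|_p\leq|\pi|_p$, and the condition on $|s|_p$ places the argument of $\exp_p$ inside its disc of convergence. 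This gives joint analyticity of the integrand in $(s,\bar t)$ on the stated region and thereby justifies both the convergence of the right-hand side and the interchange of sum and integral.
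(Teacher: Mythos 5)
Your proposal is correct and follows essentially the same route as the paper's proof: apply the identity $\la x+\bar\o\cdot\bar t\,\ra=\frac{\la x\ra}{x}(x+\bar\o\cdot\bar t\,)$ valid for $|x|_p>\|\bar\o\|_p$, expand by the $p$-adic binomial series, and integrate term by term via Corollary~\ref{co-lem-1} at $x=0$. The only difference is that you spell out the uniform estimates and the $\exp_p$/$\log_p$ argument justifying the interchange and the two domains of analyticity, which the paper leaves implicit.
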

\begin{proof} Under the stated hypotheses, for all $\bar t\in\mathbb Z_p^N,$ by (\ref{int-<>}), we have
$$\begin{aligned}
\la x+\bar\o\cdot\bar t\,\ra^{1-s}&=\left(\frac {\la x\ra}x (x+\bar\o\cdot\bar t\,)\right)^{1-s}  \\
&=\la x\ra^{1-s}\sum_{j=0}^\infty\binom{1-s}{j}(\bar\o \cdot\bar t\,)^j x^{-j}
\end{aligned}$$
as an identity of analytic functions.
From (\ref{eu-N=0}), denote by $$(\bar\o \cdot\bar t\,)^j=E_{0,j}(\bar\o \cdot\bar t;-).$$ If integrating  the above equality
with respect to $t_1,\ldots, t_N$, then by (\ref{mul-def-E-zeta}) and Corollary \ref{co-lem-1}, we have
$$\begin{aligned}
\zeta_{p,E,N}(s,x;\bar\o)&=\int_{\mathbb Z_p^N}\la x+\bar\o\cdot\bar t\,\ra^{1-s}d\mu_{-1}(\bar t\,) \\
&=\la x\ra^{1-s}\sum_{j=0}^\infty\binom{1-s}{j} x^{-j}\int_{\mathbb Z_p^N}(\bar\o \cdot\bar t\,)^jd\mu_{-1}(\bar t\,) \\
&=\la x\ra^{1-s}\sum_{j=0}^\infty\binom{1-s}{j}E_{N,j}(0;\bar\o)x^{-j}.
\end{aligned}$$
Therefore the assertion is clear.
\end{proof}

\begin{theorem}[Stirling's series]\label{Stirling}
Suppose $\o_1,\ldots,\o_N,x\in\mathbb C_p^\times$ and $|x|_p>\|\bar\o\|_p.$ Then
$$\begin{aligned}
\Log(x;\bar\omega)&=x(\log_p x-1)+E_{N,1}(0;\bar\o)\log_p x \\
&\quad+\sum_{j=2}^\infty\frac{(-1)^{j}}{j(j-1)}E_{N,j}(0;\bar\o)x^{-j+1}.
\end{aligned}$$
\end{theorem}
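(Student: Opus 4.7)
The plan is to substitute the expansion from Theorem~\ref{t-expan-p=ga} into the defining relation~(\ref{mul-e-log}) of $\Log(x;\bar\omega)$ and differentiate termwise at $s=0$. Since Theorem~\ref{t-expan-p=ga} asserts the identity holds as analytic functions on a disc about $s=0$ under the hypothesis $|x|_p > \|\bar\o\|_p$, termwise differentiation inside this disc is legitimate.

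First I would rewrite the individual coefficients so the apparent singularity at $s=1$ disappears after multiplying by $1/(s-1)$. Using $E_{N,0}(0;\bar\o)=1$, one has
\[
\frac{1}{s-1}\binom{1-s}{j} = \begin{cases} 1/(s-1), & j=0, \\ -1, & j=1, \\ \dfrac{(-1)^j\, s(s+1)\cdots(s+j-2)}{j!}, & j\geq 2.\end{cases}
\]
Hence
\[
\frac{1}{s-1}\zeta_{p,E,N}(s,x;\bar\o) = \la x\ra^{1-s}\!\left[\frac{1}{s-1} - E_{N,1}(0;\bar\o)x^{-1} + \sum_{j\geq 2}\frac{(-1)^j s(s+1)\cdots(s+j-2)}{j!}E_{N,j}(0;\bar\o)x^{-j}\right].
\]

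Next I would apply the product rule at $s=0$. The factor $\la x\ra^{1-s}$ contributes $\la x\ra$ at $s=0$ and derivative $-\la x\ra\log_p\la x\ra = -\la x\ra\log_p x$. The bracketed factor evaluated at $s=0$ equals $-1 - E_{N,1}(0;\bar\o)x^{-1}$ (every $j\geq 2$ term vanishes because of the leading factor of $s$). For its derivative at $s=0$, the key identity is
\[
\frac{d}{ds}\bigl[s(s+1)\cdots(s+j-2)\bigr]\bigg|_{s=0} = (j-2)!,
\]
so the bracket's derivative at $s=0$ equals $-1 + \sum_{j\geq 2}\frac{(-1)^j}{j(j-1)}E_{N,j}(0;\bar\o)x^{-j}$.

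Combining via the product rule and multiplying by the prefactor $x/\la x\ra$ from~(\ref{mul-e-log}), the factors of $\la x\ra$ cancel and one obtains exactly the claimed Stirling expansion. The only non-routine point is handling the pole of $1/(s-1)$ at $s=0$: the cross term it produces with $-\la x\ra\log_p x$ is precisely what generates the leading $x\log_p x$ contribution, while the constant $-\la x\ra$ contribution from the bracket value produces the $-x$ term, together yielding $x(\log_p x - 1)$. The middle term $E_{N,1}(0;\bar\o)\log_p x$ comes from the cross product of $-\la x\ra\log_p x$ with the $-E_{N,1}(0;\bar\o)x^{-1}$ piece of the bracket, and the tail series comes from differentiating the rising factorials. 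Convergence of the tail in $\mathbb C_p$ is immediate from $|x|_p > \|\bar\o\|_p$, which makes $|E_{N,j}(0;\bar\o)x^{-j+1}|_p \to 0$ as in Theorem~\ref{t-expan-p=ga}.
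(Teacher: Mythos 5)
Your proof is correct and follows essentially the same route as the paper: substitute the expansion of Theorem~\ref{t-expan-p=ga} into the definition~(\ref{mul-e-log}) and differentiate at $s=0$ by the product (Leibniz) rule, your closed forms for $\frac{1}{s-1}\binom{1-s}{j}$ and the identity $\frac{d}{ds}\bigl[s(s+1)\cdots(s+j-2)\bigr]\big|_{s=0}=(j-2)!$ being just a more explicit version of the first-order expansions $\binom{1-s}{j}=\frac{(-1)^{j+1}}{j(j-1)}s+\cdots$ that the paper cites. (One cosmetic slip: $1/(s-1)$ has no pole at $s=0$ --- it simply evaluates to $-1$ there, with derivative $-1$ --- but your computation handles these values correctly, so nothing is affected.)
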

\begin{proof}
We have the following expansions (see \cite[p.~38]{Fo}):
$$\langle x\rangle^{1-s}=\langle x\rangle(1-s\log_p\langle x\rangle+\cdots)$$
and
$$\binom{1-s}j=\frac{(-1)^{j+1}}{j(j-1)}s+\cdots,$$
provided $j\geq2.$
Rewrite the expansion of Theorem \ref{t-expan-p=ga} on a disc of positive radius about $s=0$ as
\begin{equation}\label{ga-pf-1}
\begin{aligned}
\la x\ra^{1-s}\sum_{j=0}^\infty\binom{1-s}{j}E_{N,j}(0;\bar\o)x^{-j}&=\la x\ra^{1-s}\biggl(1+(1-s)x^{-1}E_{N,1}(0;\bar\o) \\
&\quad+\sum_{j=2}^\infty\left(\frac{(-1)^{j+1}}{j(j-1)}s+\cdots\right)x^{-j}E_{N,j}(0;\bar\o)\biggl).
\end{aligned}
\end{equation}
By the definition (\ref{mul-e-log}), (\ref{ga-pf-1}) and Leibniz's rule, we obtain
$$\begin{aligned}
\Log(x;\bar\omega)&=\frac{x}{\la x\ra}\frac{\partial}{\partial s}\left(\frac1{s-1}\zeta_{p,E,N}(s,x;\bar\omega)\right)\biggl|_{s=0} \\
&=\frac{x}{\la x\ra}\frac{\partial}{\partial s}\biggl[
\frac{\la x\ra^{1-s}}{s-1}\biggl(1+(1-s)x^{-1}E_{N,1}(0;\bar\o)  \\
&\quad+\sum_{j=2}^\infty\left(\frac{(-1)^{j+1}}{j(j-1)}s+\cdots\right)x^{-j}E_{N,j}(0;\bar\o)\biggl)\biggl]\biggl|_{s=0} \\
&=(x\log_px-x)(1+x^{-1}E_{N,1}(0;\bar\o)) \\
&\quad+E_{N,1}(0;\bar\o)+\sum_{j=2}^\infty\frac{(-1)^{j}}{j(j-1)}x^{-j+1}E_{N,j}(0;\bar\o).
\end{aligned}$$
This completes the proof.
\end{proof}

\begin{remark}
1. Since ${\rm Log}\, \Gamma_{\! D,E,1}(x;1)={\rm Log}\, \Gamma_{\! D,E}(x)$ for $x\in \mathbb C_p\setminus\mathbb Z_p$
and $E_{1,j}(0;1)=E_j(0)$ for $n\in\mathbb N_0,$ when $N=1$ and $\omega_1=1$ Theorem \ref{Stirling} becomes the known relation \cite[Theorem 2.3]{KS13}
$${\rm Log}\, \Gamma_{\! D,E}(x)=x(\log_p x-1)+E_{1}(0)\log_p x
+\sum_{j=2}^\infty\frac{(-1)^{j}}{j(j-1)}E_{j}(0)x^{-j+1}.$$
If $j$ is even, then we have $E_j(0)=0,$ and we may replace the symbol $(-1)^{j}$ by $-1$ in the above formula.

2. From Theorem \ref{Stirling}, Stirling's series expansion for $\Log(x;\bar\omega)$ agrees exactly with the asymptotic expansion
for $\log \Gamma_{E,N}(x;\bar\omega)$ given by Theorem \ref{int-re-3} where the infinite series converges $p$-adically for large $x$ and
the error term $R_{N,M}$ vanishes.
This phenomenon was first observed   by Diamond \cite[Theorem 6]{Di} for $N=1$, and  by Tangedal and Young for  general positive integers $N$ \cite[Theorem 4.2]{TY}.
\end{remark}

By differentiating both sides of Theorem \ref{Stirling} with respect to $x,$
we have the following Laurent series expansion of $\psi_{p,E,N}^{(k+1)}(x;\bar\omega).$

\begin{corollary}\label{h-Di-ft-Eu}
Let $\o_1,\ldots,\o_N,x\in\mathbb C_p^\times$ and $|x|_p>\|\bar\o\|_p.$  Then
$$\psi_{p,E,N}(x;\bar\omega)=\log_p x+E_{N,1}(0;\bar\o)\frac1x+\sum_{j=1}^{\infty}\frac{(-1)^j}{j+1}\frac{1}{x^{j+1}}E_{N,j+1}(0;\bar\omega)$$
and
$$\psi_{p,E,N}^{(k+1)}(x;\bar\omega)=(k-1)!\sum_{j=0}^{\infty}(-1)^j\binom{-j-1}{k-1}\frac{1}{x^{k+j}}E_{N,j}(0;\bar\omega),$$
where $k\in\mathbb N.$
\end{corollary}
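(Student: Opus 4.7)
The plan is to obtain both formulas directly from Theorem \ref{Stirling} by term-by-term differentiation. Since $|x|_p > \|\bar\omega\|_p$, all the series that appear converge on an open neighborhood of $x$ in $\mathbb{C}_p^\times$, and (by standard facts about power series/Laurent expansions on $p$-adic annuli) they may be differentiated termwise. The identity $D \Log(x;\bar\omega) = \psi_{p,E,N}(x;\bar\omega)$ in (\ref{der-h}) then produces the asserted expansion of $\psi_{p,E,N}(x;\bar\omega)$, and iterating gives the formula for $\psi_{p,E,N}^{(k+1)}(x;\bar\omega)$.

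For the first formula I would differentiate Theorem \ref{Stirling} once. Using $\frac{d}{dx}\log_p x = \tfrac{1}{x}$, the term $x(\log_p x - 1)$ contributes $\log_p x$, the term $E_{N,1}(0;\bar\omega)\log_p x$ contributes $E_{N,1}(0;\bar\omega)/x$, and the generic term $\tfrac{(-1)^j}{j(j-1)}E_{N,j}(0;\bar\omega)x^{-j+1}$ contributes $\tfrac{(-1)^{j+1}}{j}E_{N,j}(0;\bar\omega)x^{-j}$ (the factor $-j+1$ cancels the $j-1$ in the denominator). Reindexing by $i = j-1$ converts the tail into the claimed $\sum_{j=1}^\infty \frac{(-1)^j}{j+1}E_{N,j+1}(0;\bar\omega)x^{-(j+1)}$.

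For the second formula I would apply $D^k$ to the first. The piece $\log_p x$ gives $(-1)^{k-1}(k-1)!\,x^{-k}$, matching the $j=0$ term of the target once one notes $E_{N,0}(0;\bar\omega) = 1$ and $\binom{-1}{k-1}=(-1)^{k-1}$. For $j\geq 1$ one uses $\frac{d^k}{dx^k} x^{-n} = k!\binom{-n}{k} x^{-n-k}$. After reindexing, the main identification to verify is the combinatorial equality
\begin{equation*}
\frac{(-1)^{j-1}}{j}\, k!\,\binom{-j}{k} \;=\; (-1)^{j}(k-1)!\,\binom{-j-1}{k-1} \qquad (j\geq 1),
\end{equation*}
which reduces, after pulling out signs via $\binom{-j}{k}=(-1)^k\binom{j+k-1}{k}$ and $\binom{-j-1}{k-1}=(-1)^{k-1}\binom{j+k-1}{k-1}$, to the elementary identity $\binom{j+k-1}{k} = \tfrac{j}{k}\binom{j+k-1}{k-1}$. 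This is the one non-mechanical step.

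The main obstacle is the bookkeeping: making sure the $j$-index shifts in the two expansions line up and that the combinatorial identity above is correctly applied with the right signs. There is no analytic difficulty, since termwise differentiation is justified on the region $|x|_p>\|\bar\omega\|_p$ where all series in play converge. I would present the argument by stating the differentiation of Theorem \ref{Stirling}, recording the first formula, and then deriving the second by induction on $k$ (with the combinatorial identity supplying the inductive step), or equivalently by a single $k$-fold application of $D$ combined with the Pascal-type identity above.
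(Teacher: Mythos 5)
Your proposal is correct and follows exactly the paper's route: the paper derives this corollary by differentiating the Stirling series of Theorem \ref{Stirling} term by term with respect to $x$, which is precisely what you do (you merely supply the reindexing and the binomial-coefficient identity that the paper leaves implicit). Your sign and coefficient bookkeeping checks out, so no changes are needed.
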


Although for $x\in\Lambda$ our functions $\zeta_{p,E,N}(s,x;\bar\omega)$ and $\Log(x;\bar\omega)$ are undefined,
we  may extend their  definitions to $x\in\Lambda$ and $\|\bar\omega\|_p<1$ as zero values, and denote them by $\zeta_{p,E,N}^*(s,x;\bar\omega)$ and $L\Gamma_{p,E,N}(x;\bar\omega)$ (see Kashio \cite{Kashio}, Tangedal and Young \cite{TY}).
The remaining case, when $\|\bar\omega\|_p\geq1$ and $x\in\Lambda$,
can be expressed in terms of the above definitions, as we show in what follows.

We consider the function
\begin{equation}\label{sta-zet}
\zeta_{p,E,N}^*(s,x;\bar\omega)=\int_{\mathbb Z_p^N}f^*(x,\bar t\,)d\mu_{-1}(\bar t\,),
\end{equation}
where
$$f^*(x,\bar t\,)=\begin{cases}f(x,\bar t\,) &\text{if } |x+\bar\omega\cdot\bar t |_p=1, \\
0 &\text{if } |x+\bar\omega\cdot\bar t |_p<1
\end{cases}$$
and $f(x,\bar t\,)$ is as defined in (\ref{def-ft}). Then we define
\begin{equation}\label{mul-e-log-star}
L\Gamma_{p,E,N}(x;\bar\omega)=\frac{x}{\la x\ra}\frac{\partial}{\partial s}\left(\frac1{s-1}\zeta_{p,E,N}^*(s,x;\bar\omega)\right)\biggl|_{s=0}.
\end{equation}

\begin{theorem}
Suppose that $\|\bar\omega\|_p\geq1$ and $x\in\Lambda.$ Then we have
$$\zeta_{p,E,N}^*(s,x;\bar\omega)=\sum_{{\substack{0\leq j_i<p \\ |x+\bar j\cdot\bar\omega|_p=\|\bar\omega\|_p}}}
(-1)^{|\bar j|}\zeta_{p,E,N}\left(s,\frac{x+\bar j\cdot\bar\omega}{p};\bar\omega\right)$$
and
$$L\Gamma_{p,E,N}(x;\bar\omega)=\sum_{{\substack{0\leq j_i<p \\ |x+\bar j\cdot\bar\omega|_p=\|\bar\omega\|_p}}}
(-1)^{|\bar j|}\Log\left(\frac{x+\bar j\cdot\bar\omega}{p};\bar\omega\right),$$
where the sums are all over all vectors $\bar j=(j_1,\ldots,j_N)$ with $0\leq j_i<p ~(i=1,\ldots,N)$ and $|x+\bar j\cdot\bar\omega|_p=\|\bar\omega\|_p.$
\end{theorem}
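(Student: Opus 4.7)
The plan is to derive both identities from the fermionic $p$-distribution formula applied to the defining integrals. I would first establish the $\zeta_{p,E,N}^*$ identity via decomposition of $\mathbb Z_p^N$ into cosets modulo $p$, and then pass to $L\Gamma_{p,E,N}$ via the defining differentiation in (\ref{mul-e-log-star}).

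For the first identity, I would begin from (\ref{sta-zet}) and apply the distribution formula (\ref{mul-ft-4}) with $m = p$ iteratively in each of the $N$ coordinates $t_1, \ldots, t_N$:
$$\zeta_{p,E,N}^*(s, x; \bar\omega) = \sum_{0 \leq j_i < p}(-1)^{|\bar j|}\int_{\mathbb Z_p^N} f^*(x, \bar j + p\bar t\,)\, d\mu_{-1}(\bar t\,).$$
For each fixed $\bar j$, the integrand is determined by the norm $|x + \bar\omega\cdot\bar j + p\bar\omega\cdot\bar t|_p$. The estimate $|p\bar\omega\cdot\bar t|_p \leq p^{-1}\|\bar\omega\|_p < \|\bar\omega\|_p$, combined with the non-Archimedean triangle inequality, shows that this norm equals $|x+\bar\omega\cdot\bar j|_p$ whenever $|x+\bar\omega\cdot\bar j|_p > p^{-1}\|\bar\omega\|_p$, and is bounded by $p^{-1}\|\bar\omega\|_p$ otherwise. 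Under the hypothesis $\|\bar\omega\|_p \geq 1$, together with the defining conditions for $f^*$ in (\ref{sta-zet}), this forces $f^*(x, \bar j + p\bar t\,) = f(x, \bar j + p\bar t\,)$ uniformly in $\bar t$ when $|x+\bar\omega\cdot\bar j|_p = \|\bar\omega\|_p$, and $f^*(x, \bar j + p\bar t\,) = 0$ identically otherwise.

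For each surviving index $\bar j$, I would factor $x + \bar\omega\cdot\bar j + p\bar\omega\cdot\bar t = p\bigl(\tfrac{x + \bar\omega\cdot\bar j}{p} + \bar\omega\cdot\bar t\bigr)$ and invoke the multiplicativity of $\langle\cdot\rangle$ together with $\langle p\rangle = 1$ (which follows from $\hat p = \widehat{1} = 1$ via (\ref{p-brac})) to rewrite
$$\bigl\langle x + \bar\omega\cdot\bar j + p\bar\omega\cdot\bar t\bigr\rangle^{1-s} = \bigl\langle \tfrac{x + \bar\omega\cdot\bar j}{p} + \bar\omega\cdot\bar t\bigr\rangle^{1-s}.$$
Because $|(x + \bar j\cdot\bar\omega)/p|_p = p\|\bar\omega\|_p$ strictly exceeds $\|\bar\omega\|_p$, the translated argument lies outside $\Lambda$, so the inner integral evaluates via (\ref{mul-def-E-zeta}) to $\zeta_{p,E,N}\bigl(s, \tfrac{x + \bar j\cdot\bar\omega}{p}; \bar\omega\bigr)$. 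Summing over the surviving $\bar j$ yields the first identity.

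For the second identity, I would apply the operator $\tfrac{x}{\langle x\rangle}\tfrac{\partial}{\partial s}\bigl(\tfrac{1}{s-1}\,\cdot\,\bigr)\big|_{s=0}$ to both sides of the first identity; the finite sum over $\bar j$ exchanges with this operator, and each term on the right must be recognized as $\Log((x+\bar j\cdot\bar\omega)/p; \bar\omega)$ via (\ref{mul-e-log}). The main technical obstacle here is the reconciliation of the prefactor $\tfrac{x}{\langle x\rangle}$ from (\ref{mul-e-log-star}) with the prefactor $\tfrac{(x+\bar j\cdot\bar\omega)/p}{\langle (x+\bar j\cdot\bar\omega)/p\rangle}$ appearing inside the definition of $\Log$. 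Using $\langle p\rangle = 1$ to get $\langle (x+\bar j\cdot\bar\omega)/p\rangle = \langle x+\bar j\cdot\bar\omega\rangle$, together with the surviving condition $|x+\bar j\cdot\bar\omega|_p = \|\bar\omega\|_p$ and the Teichm\"uller compatibility $\widehat{(x+\bar j\cdot\bar\omega)/p} = \widehat{x+\bar j\cdot\bar\omega}$, the two prefactors are shown to balance so that no additional correction term arises, producing the claimed identity. In contrast to Theorem \ref{e-gamma-thm1}(4), the absence of a $\log_p p$ term here reflects the fact that $\log_p\langle p\rangle = \log_p 1 = 0$ enters trivially.
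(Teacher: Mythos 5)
Your first identity follows essentially the paper's route: apply the distribution formula (\ref{mul-ft-4}) with $m=p$ coordinate by coordinate to the integral (\ref{sta-zet}), sort the cosets $\bar j+p\mathbb Z_p^N$ according to $|x+\bar j\cdot\bar\omega|_p$, and absorb the factor of $p$ using $\langle p\rangle=1$. The place where you diverge is in trying to treat all $\|\bar\omega\|_p\ge1$ in one stroke. Your trichotomy --- $f^*(x,\bar j+p\bar t\,)=f(x,\bar j+p\bar t\,)$ identically on the coset when $|x+\bar j\cdot\bar\omega|_p=\|\bar\omega\|_p$, and $f^*\equiv0$ on the coset otherwise --- is only forced by the definition in (\ref{sta-zet}) (whose cutoff sits at absolute value $1$, not at $\|\bar\omega\|_p$) in the case $\|\bar\omega\|_p=1$. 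If $\|\bar\omega\|_p>1$ and $1\le|x+\bar j\cdot\bar\omega|_p<\|\bar\omega\|_p$, then $|p\,\bar\omega\cdot\bar t\,|_p$ can equal $|x+\bar j\cdot\bar\omega|_p$, the ultrametric inequality is no longer strict, and the coset contains points with $|x+\bar\omega\cdot(\bar j+p\bar t\,)|_p=1$ as well as points with this norm $<1$; on such a coset $f^*$ is neither identically $f$ nor identically $0$, so the corresponding term is not of the claimed form. The paper avoids this by proving the case $\|\bar\omega\|_p=1$ directly and then deducing the general case from the dilation relation (Theorem \ref{e-zeta-thm1}(2) and its $\zeta^*$-analogue); you need either that reduction or a cutoff in $f^*$ placed at $\|\bar\omega\|_p$.

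For the second identity, your assertion that the two prefactors ``balance'' is not justified by what you invoke. From (\ref{p-brac}) one has $z/\langle z\rangle=p^{v_p(z)}\hat z$, so with $y_{\bar j}=(x+\bar j\cdot\bar\omega)/p$ the factor produced by combining (\ref{mul-e-log-star}) with (\ref{mul-e-log}) is $\frac{x}{\langle x\rangle}\cdot\frac{\langle y_{\bar j}\rangle}{y_{\bar j}}=p^{\,v_p(x)-v_p(y_{\bar j})}\,\hat x/\hat y_{\bar j}$, which is not $1$ in general (try $N=1$, $\omega_1=1$, $x=p$, $j\ne0$). The facts you cite, $\langle p\rangle=1$ and $\widehat{(x+\bar j\cdot\bar\omega)/p}=\widehat{x+\bar j\cdot\bar\omega}$, do give $\langle y_{\bar j}\rangle=\langle x+\bar j\cdot\bar\omega\rangle$, but they say nothing about the ratio $x/y_{\bar j}$, which is where the discrepancy lives. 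The paper is admittedly just as terse here (``the second part follows from (\ref{mul-e-log-star}) and the first part''), but you commit to a specific cancellation that does not hold as stated, so this step needs either an explicit tracking of the resulting factor or a different argument.
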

\begin{proof}
Let us assume that $\|\bar\omega\|_p=1$ and $x\in\Lambda.$ From (\ref{mul-ft-4}) and (\ref{sta-zet}), we obtain
\begin{equation}\label{sta-zet-pf}
\begin{aligned}
\zeta_{p,E,N}^*(s,x;\bar\omega)&=\int_{\mathbb Z_p^N}f^*(x,\bar t\,)d\mu_{-1}(\bar t\,) \\
&=\sum_{0\leq j_i<p }
(-1)^{|\bar j|}\int_{\mathbb Z_p^N}f^*(x,\bar j+p\bar t\,)d\mu_{-1}(\bar t\,)  \\
&=\sum_{0\leq j_i<p}
(-1)^{|\bar j|}\int_{\mathbb Z_p^N}f^*(x+\bar j\cdot\bar\omega,p\bar t\,)d\mu_{-1}(\bar t\,)  \\
&=\sum_{{\substack{0\leq j_i<p  \\ |x+\bar\omega\cdot\bar t |_p=1}}}
(-1)^{|\bar j|}\int_{\mathbb Z_p^N}f(x+\bar j\cdot\bar\omega,p\bar t\,)d\mu_{-1}(\bar t\,)  \\
&=\sum_{{\substack{0\leq j_i<p  \\ |x+\bar\omega\cdot\bar t |_p=1}}}
(-1)^{|\bar j|}\int_{\mathbb Z_p^N}f\left(\frac{x+\bar j\cdot\bar\omega}{p},\bar t\right)d\mu_{-1}(\bar t\,)  \\
&=\sum_{{\substack{0\leq j_i<p  \\ |x+\bar\omega\cdot\bar t |_p=1}}}
(-1)^{|\bar j|}\zeta_{p,E,N} \left(s,\frac{x+\bar j\cdot\bar\omega}{p};\bar \omega\right),
\end{aligned}
\end{equation}
which proves the first part in the case $\|\bar\omega\|_p=1.$ The case of general $\|\bar\omega\|_p\geq1$ may be deduced from the dilation relation
(Theorem \ref{e-zeta-thm1}(2) above and \cite[Eq.~(5.7)]{Kashio}) of $\zeta_{p,E,N}$ and $\zeta_{p,E,N}^*.$
The second part follows from (\ref{mul-e-log-star}) and the first part.
\end{proof}

\end{document}